\newcommand{\R}{\mathbb{R}}
\newcommand{\calA}{\mathcal{A}}
\newcommand{\calC}{\mathcal{C}}
\newcommand{\calD}{\mathcal{D}}
\newcommand{\calL}{\mathcal{L}}
\newcommand{\calM}{\mathcal{M}}
\newcommand{\calP}{\mathcal{P}}
\newcommand{\abs}[1]{\vert #1 \vert}
\newcommand{\norm}[1]{\Vert #1 \Vert}
\newcommand{\set}[1]{\left\lbrace #1\right\rbrace}
\newcommand{\sse}{\subseteq}
\newcommand{\sprod}[1]{\left\langle #1 \right\rangle}
\newcommand{\sph}{\mathbb{S}}
\newcommand{\prb}[1]{\mathbb{P}\left( #1 \right)}
\newcommand{\erw}[1]{\mathbb{E}\left( #1 \right)}
\newcommand{\impl}{\Rightarrow}
\newcommand{\ind}{\mathds{1}}
\newcommand{\geqsim}{\gtrsim}
\newcommand{\leqsim}{\lesssim}
\DeclareMathOperator{\cone}{cone}
\DeclareMathOperator{\dist}{dist}
\DeclareMathOperator{\spn}{span}
\DeclareMathOperator{\supp}{supp}
\DeclareMathOperator{\argmin}{argmin}
\DeclareMathOperator{\ran}{ran}
\DeclareMathOperator{\pos}{pos}
\newtheorem{lem}{Lemma}
\newtheorem{prop}[lem]{Proposition}
\newtheorem{theo}[lem]{Theorem}
\newtheorem{rem}[lem]{Remark}
\numberwithin{lem}{section}
\title{Soft Recovery Through $\ell_{1,2}$ Minimization with Applications in Recovery of Simultaneously Sparse and Low-Rank Matrices\thanks{This work has been submitted to the IEEE for possible publication. Copyright may be transferred without notice,
after which this version may no longer be accessible.}}
\author{Axel Flinth\thanks{E-mail: {\tt flinth@math.tu-berlin.de}}}
\affil{Institut für Mathematik \\ Technische Universität Berlin}
\begin{document}

\maketitle

\begin{abstract}
This article provides a new type of analysis of a compressed-sensing based technique for recovering column-sparse matrices, namely minimization of the $\ell_{1,2}$-norm. Rather than providing conditions on the measurement matrix which guarantees the solution of the program to be exactly equal to the ground truth signal (which already has been thoroughly investigated), it presents a condition which guarantees that the solution is approximately equal to the ground truth. \emph{Soft recovery statements} of this kind are to the best knowledge of the author a novelty in Compressed Sensing. Apart from the theoretical analysis, we present two heuristic proposes how this property of the $\ell_{1,2}$-program can be utilized to design algorithms for recovery of matrices which are sparse and have low rank at the same time.

\underline{Keywords:} Compressed Sensing, Low-Rank matrices, $\ell_{1,2}$-minimization, Column Sparsity, Convex Optimization.

\underline{MSC2010:} 52A41, 90C25.
\end{abstract}

\section{Introduction}


During the course of the last decade, the concept of \emph{Compressed Sensing} \cite{CandesTao2005} has gained an enormous interest in the signal processing community. Put shortly, Compressed Sensing is the science on how one can solve underdetermined equations using structural assumptions on the solutions (ground truth signals). Originally, the problem of recovering sparse vectors $x_0 \in \R^n$ from linear measurements $Ax_0$ were considered, but the ideas have been used to extend the theories to many other settings. 
In this paper, we will stay in the linear measurement regime, but consider another type of signal, namely \emph{column-sparse matrices}. A matrix $X \in \R^{k,n}$ is thereby said to be \emph{$s$-column sparse} when only $s$ of its columns are non-zero.  This problem appears naturally in a certain instance of the so called \emph{blind deconvolution problem}. For motivational purposes, let us begin by describing this in a bit greater detail.

\subsection{Blind Deconvolution and Column-sparse Matrices}
Fundamentally, the problem of blind deconvolution reads as follows: From observing the convolution $v= w*x$ of two signals $w$ and $x$, reconstruct $w$ and $x$. Without any structurial assumptions on $w$ and $x$, we cannot expect to succeed -- this is already clear from considering the dimensions of the problem.

There is a standard way of transforming the blind deconvolution to a matrix recovery problem \cite{ahmed2014blind}. It goes as follows: First, take the Fourier transform of the equation $v= w * x$:
\begin{align*}
\hat{v} = \hat{w} \odot \hat{x},
\end{align*}
where $\odot$ refers to pointwise multiplication, i.e. $(\hat{w}_i \odot \hat{x})_i = \hat{w}_i \hat{x}_i$. Now we make two assumptions: The vector $\hat{w}$ lies in a $k$-dimensional subspace of $\R^n$ with basis $(d_i)_i$, and $\hat{x}$ is sparse in some basis $(e_i)_i$ of $\R^n$.  Both the subspace and the basis are known. In applications, this could for instance mean that $w$ is known to be bandlimited, and that $x$ is a result of a user transmitting a short linear combination of some pre-determined main modes. Setting $\hat{w} = \sum_j h_j d_j$, and $\hat{x}= \sum_\ell z_\ell e_\ell$ and expanding our equation $\hat{v}_i = \hat{w}_i \hat{x}_i$, we obtain:
\begin{align*}
	\hat{v}_i = \bigg(\sum_{j=1}^k h_j d_j \bigg)_i \bigg(\sum_{\ell=1}^n z_\ell e_\ell \bigg)_i = \sum_{j,\ell} d_j(i)e_\ell(i) h_jz_\ell = \calA(h z^*), \quad i=1, \dots n
\end{align*}
where we defined the linear map $\calA$ through its action on an $(k \times n)$-matrix
\begin{align*}
	\calA(M) = \sum_{j,\ell} d_j(i)e_\ell(i)M_{j,\ell}.
\end{align*} 
Hence, after proper reformulation, the \emph{non-linear} blind deconvolution problem of recovering a \emph{pair of vectors} can be seen as the problem of recovering a \emph{matrix} from \emph{linear} measurements $y = \calA(M)$.
It is also evident that matrices we want to recover, i.e. matrices of the form $Z_0=h_0z_0^*$ with $z_0 \in \R^n$ sparse and $h_0 \in \R^k$, are column sparse. One should also note that $h_0z_0^*$ is low-rank - in fact, all columns are scaled versions of the vector $h_0$. Although we will primarily focus on the column-sparse structure in this work, we will also discuss the low rank aspect of the problem.

One could also imagine that one observes the sum of several convolutions with unknown filters: $v = \sum_{\ell=1}^r w_\ell * x_\ell$. One then sometimes speak of a \emph{blind deconvolution and demixing} problem. A similar transformation of this problem leads to the task of recovering a matrix $Z_0 = \sum_{\ell=1}^r h_\ell^0 (z^0_\ell)^*$ from linear measurements. Assuming that the vectors $z^0_\ell$, $\ell=1, \dots r$ are sparse, $Z_0$ again becomes column sparse. Also, it trivially has rank at most $r$. 

\subsection{$\ell_{1,2}$-Minimization and Soft Recovery}

Now let us return to the general problem of recovering a column-sparse matrix $Z_0 \in \R^{k,n}$ from linear measurements $y=\calA(Z_0)$. A canonical approach \cite{Eldar2009BlockSparse} to solving this problem is to use $\ell_{1,2}$-minimization, i.e. to solve the problem
\begin{align*}
	\min \norm{Z}_{1,2} \text{ subject to } \calA(Z)= y. \tag{$\calP_{1,2}$}
\end{align*}
The $\ell_{1,2}$-norm of a matrix is thereby defined as the sum of the Euclidean norm of its columns. There is much known about this problem -- see for instance \cite{Eldar2009BlockSparse, stojnic2008reconstruction}. 

Performing small numerical experiments with $\calA: \R^{k,n} \to \R^m$ Gaussian, one can observe that $\calP_{1,2}$ recovers the ground truth signals approximately already for values for $m$ for which the solution $\widehat{Z}$ of $\calP_{1,2}$ is almost never equal to $Z_0$. In particular, one notices that the solution $\widehat{Z}$ of of $\calP_{1,2}$ seems to have the following behaviour:

\begin{itemize}
	\item The directions of the columns $\widehat{Z}(i)$ of $\widehat{Z}(i)$ are well-aligned with the directions of the columns of $Z_0(i)$ (for the indices corresponding to non-zero columns of $Z_0$).
	
	\item The energy of $\widehat{Z}$ is concentrated on the columns corresponding to the non-zero columns of $Z_0$. That is: If $Z_0(i)=0$, $\norm{\widehat{Z}(i)}$ will be small, and vice versa.
	
	\item Also, if $Z_0$ has rank $r$, the subspace spanned by the first $r$ left singular vectors of $\widehat{Z}$ is close to $\ran Z_0$.
	
	\end{itemize}
	See also Figure \ref{fig:intuition}. We will say, in a consciously relatively unprecise manner, that $Z_0$ has been \emph{softly recovered} by the program $\calP_{1,2}$ if the solution $\widehat{Z}$ behaves as described above. The aim of this work is to provide a theoretical explanation of this feature of $\calP_{1,2}$. In particular, we want to state conditions on $\calA$ which imply soft recovery, but are weaker than the ones for exact recovery.
	
To the best knowledge of the author, this is the first time an analysis of this type has been conducted for any compressed sensing problem. One major reason for this is probably that the  ''original'' Compressed Sensing problem of $\ell_1$-minimization;
\begin{align*}
	\min \norm{z}_1 \text{ subject to } Az=y, \tag{$\calP_1$}
\end{align*} 
\emph{does not} exhibit soft recovery. In fact, Proposition \ref{prop:l1supp} in the Appendix of this paper shows for each matrix $A$ and each ground truth signal $z_0$, either the solution of the problem $\calP_1$ is equal to $z_0$, or there exists as at least one index $i$ in the support of $z_0$ such that $\hat{z}(i) =0$. \\

	The reasons for conducting this analysis is twofold. First and foremost, we want to fundamentally understand the reason behind the soft recovery phenomenon. The technique used for the proof is also quite general, and could possibly be applied to other optimization-based recovery programs than $\calP_{1,2}$ in the future.
	
	The second reason is that if soft recovery is present, the solution $\widehat{Z}$ of $\calP_{1,2}$ will give us information about the column-support and range of the ground truth solution $Z_0$ also when using too few measurements to recover it exactly. We will in fact use this fact to design two heuristic algorithms for recovery of column-sparse, low-rank matrices, and test their performance numerically. \\
	
	The reader should not confuse soft recovery statements with statements about \emph{instance optimality} \cite{wojtaszczyk2010stability}. The latter are statements of the form ''if the ground truth signal is \emph{approximately} column-sparse, $\calP_{1,2}$ will \emph{approximately} recover it''. In contrast, we will consider \emph{exactly} column-sparse ground truth signals, and investigate when they are \emph{approximately} recovered.

\begin{figure}
	\begin{center}
		\includegraphics[scale=.5]{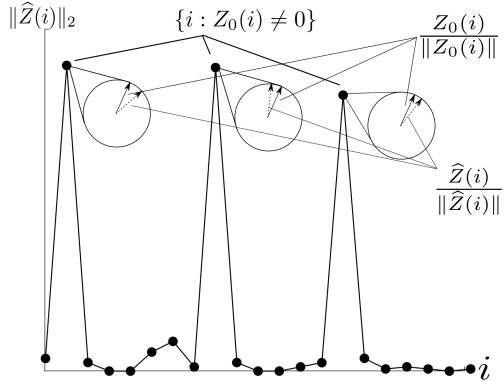}
		\end{center}
		\caption{The aim of this work is to prove that the situation this figure depicts is the typical one. The vectors in the small circles show the directions of the columns of $Z_0(i)$ and $\widehat{Z}(i)$, respectively, for indices $i$ with $Z_0(i)\neq 0$. Note that $k=2$ in the figure. \label{fig:intuition}}
	\end{figure}

\subsection{Related Work on Low-Rank, Column-Sparse Matrix Recovery.} 

One should note that there are more sophisticated approaches to recovering low-rank and column-sparse matrices than $\ell_{1,2}$-minimization. For the sake of completeness, let us briefly describe two of these. 

When recovering $Z_0$, we should try to take both the column sparsity and the low rank assumption into account. Since it is well known that low rank is promoted by the nuclear norm \cite{recht2010guaranteed}, defined as the sum of the singular values of the matrix,  a natural approach would be to minimize a weighted sum of the two norms
\begin{align*}
	\min \norm{Z}_{1,2} + \lambda \norm{Z}_{*} \text{ subject to } \mathcal{A}(Z)=y.
\end{align*}
This approach does perform reasonably well, but it has its flaws. From a practical point of view, choosing the correct $\lambda$ is a non-trivial task. Also, the nuclear-norm minimization procedure tend to be a lot slower than procedures for minimizing norms like $\ell_1$ or $\ell_{1,2}$. From a more theoretical standpoint, any such mixed-norm approach will need as least the same order of measurements to recover a low-rank, sparse matrix $Z_0$, as the minimum of the amounts needed to recover $Z_0$ only with help of $\norm{\cdot}_{1,2}$ and $\norm{\cdot}_{*}$, respectively. This 'single structure bottleneck' holds in much more generality - see \cite{oymak2015simultaneously}. The latter fact tells us that in order to develop a strategy which comes close to the optimal order $r(s+k)$(also see \cite{oymak2015simultaneously}), one has to deviate to other recovery methods. The authors of  \cite{oymak2015simultaneously} provide a few examples of minimization problems which recover solutions from an, up to log-factors, optimal amount of measurements, but they are not computationally feasible. 

The 'single-structure bottleneck' motivates why we in this paper choose to restrict the analysis to the program $\calP_{1,2}$ - the inclusion of the nuclear norm term does not fundamentally change the performance, while it does make the analysis of the algorithm significantly more complicated. \newline

An approach, which was proposed in \cite{lee2013near}, is to use an adapted version of the method of \emph{power factorization}, introduced in \cite{haldar2009rank}. 
Power factorization is fundamentally different from $\ell_{1,2}$-minimization, as it does not rely on minimizing one single convex function. 
The main idea is to exploit the fact that any rank-$1$-matrix can be written as $h_0z_0^*$ explicitely: After initializing $\hat{h}$ and $\hat{z}$, one iteratively fixes one of the vector and then solves a least-square problem for the second:
\begin{align*}
	\hat{h}^k &= \argmin \norm{\calA(\hat{h}(\hat{z}^{k-1})^*) -b }_2 \\
	\hat{z}^k &= \argmin \norm{\calA(\hat{h}^{k-1}\hat{z}^*)-b}_2.
\end{align*}
In the column-sparse case, one incorporates the sparsity assumption when solving for $\hat{z}^k$. The method the authors of \cite{lee2013near} choose is hard thresholding, but they state that other methods for sparse recovery could also be used for this purpose,  for instance $CoSaMP$. They are able to prove that if one very carefully initializes the two iterates $\hat{h}$ and $\hat{z}$, the method converges with high probability to the correct solution when using $\geqsim (s+k)\log(\max(n/s,k))$ measurements, which is essentially optimal. The initialization procedure used is however not computationally feasible. Because of this reason, the paper also provides an analysis for a more realistic initialization technique. The latter works under the same assumptions provided the $\infty$-norms of the vectors $h_0$ and $z_0$ are not too small (hence, they cannot be ''well spread'' on their respective supports). For general signals, they prove that $\geqsim sk\log(n)$ measurements suffice.

During the final preparations of this paper, an updated version \cite{lee2013near} was published, where an algorithm for matrices with rank higher than $1$ was discussed. The idea is again to utilize the singular value decomposition $Z= U\Lambda V^*$ and solve alternating minimization problems. The authors prove that under some technical assumptions, any matrix $Z_0$ with singular value decomposition $U_0 \Lambda_0 V_0^*$ with $U \in \R^{k,r}$ and $V\in \R^{r,n}$, where $U$ is row-$s_1$-sparse and $V$ is row-$s_2$-sparse satisfying some additional technical conditions (analogous to the $\norm{\cdot}_\infty$-bounds from above) can with high probability be recovered with $Cr(s_1+s_2)\log(\max(ek/s_1, en/s_2)$ Gaussian measurements.

	\subsection{Outline}
	The remainder of this paper is organized as follows: In Section \ref{sec:prel}, we present notation, clarifies the context and and revise duality theory of convex optimization. The latter plays a crucial role in Section \ref{sec:main}, where we present and prove the main findings of this work: We will provide results concerning all three aspects of soft recovery listed above. In particular, we will present a condition on $\calA$ securing that angular distances between the columns of the solution of $\calP_{1,2}$ and the ground truth signals are small, and provide upper bounds on how many Gaussian measurements are needed to secure that condition with high probability. In the final Section \ref{sec:heur}, we present two heuristic algorithms (called $NAST$ and Column Streamlining) for recovery of column-sparse, low rank matrices, and test their performance numerically.


\section{Preliminaries} \label{sec:prel}

Let us begin by making the brief problem description from above a bit more precise, and introduce the notation which will be used throughout the paper. We consider a matrix $Z_0 \in \R^{n,k}$ (the ground truth signal) which is \emph{column-sparse}, i.e. the set
\begin{align*}
	\supp Z_0 = \set{i \ \in [n] : Z_0(i) \neq 0}
\end{align*}
is small. $[n]$ is a shorthand for the set $\set{1, 2, \dots n}$ and $Z_0(i)$ denotes the $i$:th column of $Z_0$. The task is to recover the matrix from $m$ linear measurements, given by the map $\calA : \R^{n,k} \to \R^m$.

We will most often view the matrix $Z_0$ as a collection of $n$ vectors  $(Z_0(i))_{i \in [n]}$ in $\R^k$. Since we will make claims about the direction of these vectors, it will be convenient to decompose each of those vectors $Z(i)$ into a direction $h_i \in \sph^{k-1}$ and a magnitude $z_i \in \R^{+}$. With this decomposition in mind, we write for $z \in \R^{n}_+$ and $H \in (\sph^{k-1})^n$
\begin{align*}
	z.H = \sum_{i \in [n]} z_i h_i e_i^* = \begin{bmatrix}
	z_1 h_1 \ z_2 h_2 \ \dots \ z_n h_n\end{bmatrix}.
\end{align*}
Note that if an element $z_k$ of $z$ is equal to zero, $z.H$ is well-defined even if the corresponding $h_k$ is not specified. Also note that due to the fact that we have agreed to the convention hat $z\geq 0$, the composition $Z \to z.H$ is unique up to the fact that a column $h_i$ can be choosen arbitrarily when $z_i=0$.

Dual to the decomposition $Z \mapsto z.H$, we decompose the map $\calA$ into the $n$ maps $A_i: \R^k \to \R^m$ through
\begin{align*}
	A_i h = \calA(h e_i^*).
\end{align*}
Note that then $\calA(z.H) = \sum_{i \in [n]} z_i A_i h_i$. We will sometimes also need the following induced map
\begin{align*}
	A_H: \R^n \to \R^m , z \mapsto \calA(z.H).
\end{align*}
Note that the matrix representation of $A_H$ is given by the matrix whose $i$:th column is given by $A_i h_i$. It is also not hard to convince oneself that the dual operator $A_H^*$ is given by
\begin{align*}
	A_H^* : \R^m \to \R^n, p \mapsto \left(\sprod{h_i, A_i^* p}\right)_{i \in [n]}
\end{align*}

We have already defined the $\ell_{1,2}$-norm. We will also need the $\ell_{\infty,2}$-norm:
\begin{align*}
	\norm{Z}_{\infty,2}= \max_{i \in [n]} \norm{Z(i)}_2
\end{align*}
Note that $\norm{\cdot}_{1,2}$ harmonizes particularly well with our decomposition $Z \to z.H$; we have $\norm{Z}_{1,2} = \norm{z}_1 = \sum_{i \in [n]} z_i$ (this is true also for the $\ell_{\infty,2}$-norm, we will however never use that statement). 

We will canonically measure distances between vectors $h, \hat{h} \in \sph^{k-1}$ by their angular (geodesic) distance
\begin{align*}
	\omega(h, \hat{h}) = \arccos( \langle{h,\hat{h}}\rangle).
\end{align*}
$\omega$ defines a metric on $\sph^{k-1}$. In particular, the triangular inequality holds:
\begin{align*}
	\omega( h_1, h_2) \leq \omega(h_1, h_3) + \omega(h_3,h_2).
\end{align*}

With this notation, we can formulate our main task as follows: Provide as weak conditions as possible guaranteeing that: ($Z_0=z^0.H^0$ is the ground truth signal, $\widehat{Z}=\hat{z}.\widehat{H}$ is a solution of $\calP_{1,2}$)

\begin{itemize}
	\item $\omega(h_i^0, \hat{h}_i) \leq \alpha$ for $i \in S=\supp Z_0$ (for some previously specified, small $\alpha)$.
	
	\item $\norm{\hat{z} \vert_{S^c} }_1 \leq \epsilon$, (for some previously specified, small $\epsilon$).
	
	\end{itemize}


\subsection{Duality in Convex Programming and Exact Recovery.}

Before considering the task of tackling the problem of soft recovery, let us deploy some standard techniques for formulating a condition guaranteeing that the ground truth signal $Z_0$ is the unique solution of $\calP_{1,2}$. This will provide us with important insights of the structure of the $\ell_{1,2}$-norm, give us the opportunity to review basic facts about convex optimization, as well provide us with a benchmark that our soft recovery condition has to beat.

In the analysis, we will use \emph{duality}. Given a convex program  of the form
\begin{align}
	\min f(x) \text{ subject to } Ax = b, \ Dx \geq d \tag{$\calP$}
\end{align}
where $f: \R^q \to \R$, $A \in \R^{\ell,q}$ $D \in \R^{\kappa, q}$, it is defined as follows: First, consider the Lagrangian
\begin{align*}
	\calL : \R^q \times \R^\ell \times \R_{+}^\kappa \to \R, (x, \lambda, \mu) \mapsto f(x) + \sprod{\lambda, b-Ax} + \sprod{\mu, d-Dx}.
\end{align*}
With the help of the Lagrangian, the dual function $g: \R^\ell \times \R^\kappa_+\to \R \cup \set{-\infty} $ is defined through
\begin{align*}
	g(\lambda, \mu) := \inf_{x \in \R^q} \calL(x,\lambda, \mu).
\end{align*}
The dual problem is then defined as
\begin{align}
	\max_{\lambda, \mu \geq 0} g(\lambda, \mu). \tag{$\calD$}
\end{align}
The relation between the primal and dual problem is as follows: In any case (and in much greater generality than was presented here), the optimal value $d^*$ of the dual problem is not greater than the optimal value of the primal problem $p^*$
\begin{align*}
	p^* \geq d^*.
\end{align*}
This is known as \emph{weak duality.} Under quite general conditions, for instance for problems where  $f$ is convex, the constraints are linear , and that there exists an $x$ with $b=Ax$ and $Dx \geq d$, we even have \emph{strong duality}
\begin{align*}
	p^* =d^*.
\end{align*}
 For more information about duality in convex optimization, we refer to the book \cite{ConvexOpt}.

Let us now state and prove the exact recovery condition.
\begin{prop} \label{prop:exactRecovery}
	Let $Z_0 = z^0.H^0$ be supported on the set $S$ and $\calA: \R^{n,k} \to \R^m$ be given. $Z_0$ is a solution to the program $\calP_{1,2}$ if and only if there exists a $p \in \R^m$ such that $V= \calA^*p$ satisfies
	\begin{align}
		V(i) = h_i^0, \quad &i \in S  \label{eq:exactCond1}\\
		\norm{V(i)}_2 \leq 1, \quad& i \notin S \label{eq:exactCond2}
	\end{align}
\end{prop}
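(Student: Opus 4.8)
The plan is to recognize this as the standard first-order optimality (dual-certificate) condition for a constrained convex program, with the one piece of genuine content being the subdifferential of the $\ell_{1,2}$-norm. First I would record that, because $\norm{Z}_{1,2} = \sum_{i \in [n]} \norm{Z(i)}_2$ splits as a sum over columns, its subdifferential at $Z_0 = z^0.H^0$ factorizes column-wise. Using that the subdifferential of the Euclidean norm $x \mapsto \norm{x}_2$ equals $\set{x/\norm{x}_2}$ for $x \neq 0$ and the closed unit ball for $x = 0$, one obtains that $V \in \partial \norm{Z_0}_{1,2}$ if and only if $V(i) = h_i^0$ for $i \in S$ and $\norm{V(i)}_2 \leq 1$ for $i \notin S$. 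Thus conditions \eqref{eq:exactCond1}--\eqref{eq:exactCond2} say exactly that $V \in \partial \norm{Z_0}_{1,2}$, while the requirement $V = \calA^* p$ says $V \in \ran \calA^*$. The proposition therefore reduces to the assertion that $Z_0$ solves $\calP_{1,2}$ if and only if $\partial \norm{Z_0}_{1,2} \cap \ran \calA^* \neq \emptyset$.

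The ``if'' direction is immediate and I would dispatch it first. Suppose $V = \calA^* p$ satisfies \eqref{eq:exactCond1}--\eqref{eq:exactCond2}, so that $V$ is a subgradient at $Z_0$. For any feasible $Z$, i.e.\ any $Z$ with $\calA(Z) = y = \calA(Z_0)$, the subgradient inequality gives $\norm{Z}_{1,2} \geq \norm{Z_0}_{1,2} + \sprod{V, Z - Z_0}$, and the inner product vanishes because $\sprod{V, Z - Z_0} = \sprod{\calA^* p, Z - Z_0} = \sprod{p, \calA(Z - Z_0)} = 0$. Hence $\norm{Z}_{1,2} \geq \norm{Z_0}_{1,2}$ for every feasible $Z$, so $Z_0$ is optimal.

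For the ``only if'' direction I would use the Lagrangian duality reviewed in the preliminaries. Computing the dual of $\calP_{1,2}$ via the conjugate of the norm (whose dual norm is $\norm{\cdot}_{\infty,2}$) yields the dual program $\max \sprod{p, y}$ subject to $\norm{\calA^* p}_{\infty,2} \leq 1$. Since the objective $\norm{\cdot}_{1,2}$ is finite everywhere and the only constraint is the affine equality $\calA(Z) = y$, strong duality holds and the dual optimum is attained (no interior condition beyond feasibility is needed for affine constraints). Writing out the KKT stationarity condition, $0 \in \partial \norm{Z_0}_{1,2} - \calA^* p$, then produces a $p$ with $\calA^* p \in \partial \norm{Z_0}_{1,2}$, which is exactly \eqref{eq:exactCond1}--\eqref{eq:exactCond2}. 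Equivalently, and perhaps cleaner, one can argue geometrically: optimality of $Z_0$ over the affine set $Z_0 + \ker \calA$ is equivalent to $0 \in \partial \norm{Z_0}_{1,2} + N(Z_0)$, where the normal cone to the feasible set is $N(Z_0) = (\ker \calA)^{\perp} = \ran \calA^*$.

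I expect the ``only if'' direction to be the main obstacle, and specifically the step that guarantees the certificate \emph{exists}, i.e.\ that $\partial \norm{Z_0}_{1,2} \cap \ran \calA^*$ is nonempty. This is precisely where strong duality (equivalently, the validity of the subdifferential sum rule $\partial(f + \iota_C) = \partial f + N_C$, or a Hahn--Banach separation of the origin from a suitable convex set) must be invoked; the easy direction, by contrast, uses only convexity and needs no such hypothesis. A small but worthwhile observation that streamlines the matching of the subgradient against the normal cone is that $\ran \calA^*$ is a linear subspace, so membership is insensitive to sign and one may freely pass between $u$ and $-u$.
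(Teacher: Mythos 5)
Your proof is correct, but it is organized differently from the paper's. The paper runs a single Lagrangian-duality argument: it computes the dual of $\calP_{1,2}$ explicitly (obtaining $\max \sprod{p,b}$ subject to $\norm{\calA^*p}_{\infty,2}\leq 1$, just as you do), invokes strong duality to identify optimality of $Z_0$ with the dual value being $\norm{z^0}_1$, appeals to compactness of the dual feasible set for attainment, and then extracts conditions \eqref{eq:exactCond1}--\eqref{eq:exactCond2} from the equality case of Cauchy--Schwarz in $\norm{z^0}_1 = \sum_{i\in S} z_i^0\sprod{A_i^*p,h_i^0}$. You instead recognize \eqref{eq:exactCond1}--\eqref{eq:exactCond2} up front as the statement $V \in \partial\norm{Z_0}_{1,2}$ (the paper never mentions subdifferentials here), which splits the proposition cleanly: sufficiency follows from the subgradient inequality plus $\sprod{\calA^*p, Z-Z_0}=0$, needing no duality at all, while necessity is isolated as the existence question $\partial\norm{Z_0}_{1,2}\cap\ran\calA^*\neq\emptyset$, settled by KKT/refined Slater or the normal-cone sum rule. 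Your route buys modularity and generality (the same template works for any norm whose subdifferential is known), and your attainment argument is actually more robust than the paper's: the paper's compactness claim for the dual feasible set $\set{p : \norm{A_i^*p}_2\leq 1 \ \forall i}$ fails when $\calA$ is not surjective (the set then contains a line), whereas feasibility of affine constraints suffices for your argument. What the paper's route buys is that the explicit dual computation is not wasted work --- the same Lagrangian machinery is reused immediately afterwards in Lemma \ref{lem:AHopt}, which is the engine of the soft-recovery results, so the paper's presentation keeps a single uniform toolkit throughout.
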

\begin{proof}
 	Let us begin by calculating the dual problem $\calD_{1,2}$. The Lagrange function is given by
 	\begin{align*}
 		\calL : \R^{n,k} \times \R^m \to \R , (Z,p) \mapsto \norm{Z}_{1,2}  + \sprod{p,b-\calA(Z)} = \sprod{p,b} + \sum_{i \in n} z^0_i(1- \sprod{A_i^*p, h_i^0}).
 	\end{align*}
 	From this formula, it is easily seen that the dual function $g$ is given by 	\begin{align*}
 		g(p) = \begin{cases} \sprod{p, b} \text{ if } \norm{(A_i^*p)_{i\in [n]}}_{\infty,2} \leq 1 \\
 		- \infty \text{ else.}\end{cases}
 	\end{align*}
 	Due to strong duality, $Z_0$ is a solution to $\calP_{1,2}$ if and only if the optimal value of the problem
 	\begin{align*}
 		\max \sprod{p,b} \text{ subject to } \forall i :  \norm{A_i^*p}_2 \leq 1
 	\end{align*}
 	is given by $\norm{Z_0}_{1,2}= \Vert z^0 \Vert_1$ . Due to the compactness of the set of feasible vectors, this is the case if and only if there exists a $p \in \R^m$ with $\norm{A_i^*p}_2 \leq 1$ and $\sprod{p,b}=\norm{z^0}_1$. It is however clear that in this case,
 	\begin{align*}
 		\Vert z^0 \Vert_1 = \sprod{p, b} = \sum_{i \in S} z_i^0 \sprod{p, A_i h_i^0} = \sum_{i \in S } z_i^0 \sprod{A_i^*p, h_i^0}
 	\end{align*}
 	if and only if $A_i^*p =h_i^0$. Noting that $(\calA^*p)(i)=A_i^*p$, we see that the claim has been proven.
\end{proof}


We will tackle also the soft recovery problem using the tools of duality, however not as directly as above. The main idea will be to view the $\ell_{1,2}$-minimization as a family of $\ell_1$-minimizations . To be precise, let us assume that the solution of $\calP_{1,2}$ is given by $\hat{z}.\widehat{H}$. It is then not hard to convince oneself that $\hat{z}$ is the solution of the  problem 
\begin{align}
	\norm{ z}_1 = \sum_{i\in [n] } z_i\text{ subject to } A_{\widehat{H}}z =b, z \geq 0  \tag{$ \calP_1^{+}(\widehat{H})$}
\end{align}
After all, for any other $z \geq 0$ with $A_{\widehat{H}}z=b$, we have $\calA( z. \widehat{H})=b$ and, due to the optimality of $\hat{z}. \widehat{H}$, $\norm{\hat{z}}_1 = \Vert{\hat{z}. \widehat{H}}\Vert_{1,2} \leq \Vert {z. \widehat{H}}\Vert_{1,2} = \norm{z}_1$. Using this relatively elementary observation, we can prove the following property of the minimizer $\hat{z}.\widehat{H}$
\begin{lem}\label{lem:AHopt}
Let $\hat{z}.\widehat{H}$ be the minimizer of $\calP_{1,2}$ for $b = \calA(z^0. H^0)$, where $\norm{z^0}_1=1$. Then 
\begin{align*} 
	 \min_{\sprod{b,p} \geq 1} \max_{i\in [n]} \langle{\hat{h}_i, A_i^*p}\rangle  \geq 1
\end{align*}
\end{lem}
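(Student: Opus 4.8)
The plan is to exploit the observation stated just before the lemma: the magnitude vector $\hat z$ solves the linear program $\calP_1^{+}(\widehat H)$, i.e.\ it minimizes $\sum_i z_i$ over $z \geq 0$ subject to $A_{\widehat H} z = b$. First I would compute the Lagrange dual of this LP. Introducing a multiplier $\lambda \in \R^m$ for the equality constraint and $\mu \geq 0$ for $z \geq 0$, the Lagrangian collapses to $\sprod{\lambda,b} + \sprod{\mathbf 1 - A_{\widehat H}^*\lambda - \mu,\, z}$, so the dual function is finite precisely when $A_{\widehat H}^*\lambda + \mu = \mathbf 1$. Using the formula $A_{\widehat H}^*\lambda = (\sprod{\hat h_i, A_i^*\lambda})_{i\in[n]}$ recorded in the preliminaries, the dual reduces to
\[
  \max_{\lambda \in \R^m}\ \sprod{\lambda, b} \quad\text{subject to}\quad \sprod{\hat h_i, A_i^*\lambda} \leq 1 \ \text{ for all } i \in [n].
\]
Both programs are feasible ($\hat z$ on the primal side, $\lambda = 0$ on the dual side), so strong duality for linear programs applies and the common optimal value equals $\norm{\hat z}_1$.

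Next I would bound this optimal value. Since $\widehat Z = \hat z.\widehat H$ minimizes $\calP_{1,2}$ while $Z_0$ is feasible for it, we get $\norm{\hat z}_1 = \norm{\widehat Z}_{1,2} \leq \norm{Z_0}_{1,2} = \norm{z^0}_1 = 1$. Hence the dual optimal value $M := \norm{\hat z}_1$ satisfies $M \leq 1$.

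The core step is then a homogeneity (polarity) argument linking the dual program to the quantity in the statement. Writing $\phi(p) = \max_{i\in[n]}\sprod{\hat h_i, A_i^* p}$, which is positively homogeneous and is exactly the dual constraint function, I would first observe that every $p$ with $\sprod{b,p}\geq 1$ must have $\phi(p) > 0$: otherwise $A_{\widehat H}^* p \leq 0$ componentwise, and since $\hat z \geq 0$ with $A_{\widehat H}\hat z = b$, one would get $\sprod{b,p} = \sprod{\hat z, A_{\widehat H}^* p} \leq 0$, contradicting $\sprod{b,p}\geq 1$. For such a $p$ I rescale to $p/\phi(p)$, which is dual-feasible since $\phi(p/\phi(p)) = 1$, whence $\sprod{b,\, p/\phi(p)} \leq M$, i.e.\ $\phi(p) \geq \sprod{b,p}/M \geq 1/M$. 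Taking the infimum over all feasible $p$ yields $\min_{\sprod{b,p}\geq 1}\phi(p) \geq 1/M \geq 1$, which is the claim.

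The main obstacle is getting this polarity step exactly right: verifying that $\phi$ is strictly positive on the feasible region (so the rescaling $p \mapsto p/\phi(p)$ is legitimate) and confirming that genuine strong duality, not merely weak duality, holds. A minor edge case to dispatch separately is $b = 0$: then $A_{\widehat H}\hat z = 0$ forces $M = 0$, but the feasible set $\{\,p : \sprod{b,p}\geq 1\,\}$ is empty, so the infimum is $+\infty$ and the inequality holds vacuously; for $b \neq 0$ one has $\hat z \neq 0$ and thus $M > 0$, keeping the division valid.
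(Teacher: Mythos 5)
Your proposal is correct and follows essentially the same route as the paper: dualize $\calP_1^{+}(\widehat H)$, invoke strong duality to identify the dual optimal value with $\norm{\hat z}_1 \leq \norm{z^0}_1 = 1$, and then use positive homogeneity of $p \mapsto \max_i \sprod{\hat h_i, A_i^*p}$ to rescale any feasible $p$ into a dual-feasible point. The only differences are cosmetic: you argue directly where the paper argues by contradiction, and you explicitly dispatch the cases $\phi(p) \leq 0$ and $b = 0$, which the paper's terse renormalization step leaves implicit.
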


\begin{proof}
	The dual problem of $\calP_{1}^+(\widehat{H})$ can be written in the following way
	\begin{align}
		\max \sprod{p, b} \text{ subject to } \max_{i \in [n]} (A_{\widehat{H}}^*p)_i  \leq 1. \tag{$\calD^{+}_{1}(\widehat{H})$}
	\end{align}
	(The, relatively standard, calculations leading up to this formulation of the dual problem are, for completeness, presented in Lemma \ref{lem:DualPosL1} in the Appendix.) There exists a $\hat{z} \geq 0 $ so that $\hat{z}.\widehat{H}$ solves $\calP_{1,2}$, i.e. in particular $b= \calA(\hat{z}. \widehat{H}) = A_{\widehat{H}}\hat{z} =b$. Hence, strong duality holds. Therefore, the optimal value of $\calD_{1} (\widehat{H})$ is equal to $\Vert \hat{z} \Vert_1$.

	Now, due to the optimality of $\hat{z}.\widehat{H}$,	
	\begin{align*}
		\Vert \hat{z} \Vert_1 = \Vert \hat{z}. \widehat{H} \Vert_{1,2} \leq \Vert z^0 . H^0 \Vert_{1,2} = \Vert z^0 \Vert_1 =1.
	\end{align*}
	This means that the optimal value of $\calD_1(\widehat{H})$ is not larger than one, which implies that there cannot exist a $p$ with $\max_{i \in [n]}\sprod{h_i, A_i^*p}= \max_{i \in n} (A^*_{\widehat{H}}p)_i < 1$ and $\sprod{p,b}\geq 1$. If that would be the case,  a renormalized version of $p$ would satisfy $\max_{i \in [n]} (A^*_{\widehat{H}}p)_i \leq 1$ and $\sprod{b,p}>1$, which would contradict the fact that the optimal value of $\calD_1(\widehat{H})$ is not larger than 1.  Hence, the claim has been proven.
\end{proof}

\section{Main Results} \label{sec:main}

With Lemma \ref{lem:AHopt} in our toolbox, we are ready to state and prove one of the main result of this paper. It provides a condition guaranteeing that the angular distances between the columns of the solution of $\calP_{1,2}$ and the ground truth signals are small.

\begin{theo} \label{th:softRecAngle}
	Let $Z_0=z_0.H_0$ be supported on the set $S$ , $i^*\in S$ and $\alpha>0$ be fixed. If there exists a vector $p^* \in \R^m$  so that $V=\calA^*p$ has the following properties
	\begin{align}	
			\sum_{i \in S} \sprod{h_i^0, V(i)} z^0_i &\geq \norm{z^0}_1 \label{eq:SoftCond1}\\
			\norm{V(i)}_2 &< 1 \quad i\neq i^*\label{eq:SoftCond2} \\
			\omega\left(h_{i^*}^0, \tfrac{V(i^*)}{\norm{V(i^*)}_2}\right) &\leq \alpha.\label{eq:SoftCond3} \\
			\norm{V(i^*)}_2 \cos \alpha &\leq 1 \label{eq:SoftCond4}
	\end{align}
Then any minimizer $\hat{z}.\widehat{H}$ of $\calP_{1,2}$ obeys
\begin{align*}
	\omega(\hat{h}_{i^*},h_{i^*}^0)\leq 2 \alpha.
\end{align*}
\end{theo}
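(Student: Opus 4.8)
The plan is to feed the certificate $p^*$ into Lemma~\ref{lem:AHopt} and then read off the angular bound from the Cauchy--Schwarz inequality and the triangle inequality for $\omega$. Since both the hypotheses \eqref{eq:SoftCond1}--\eqref{eq:SoftCond4} and the conclusion constrain only directions and are invariant under rescaling $z^0$, I would first normalise $\norm{z^0}_1 = 1$ so that Lemma~\ref{lem:AHopt} applies verbatim. Writing $V(i) = A_i^* p^*$ and using $b = \calA(z^0.H^0) = \sum_{i \in S} z_i^0 A_i h_i^0$, I would compute
\begin{align*}
	\sprod{b, p^*} = \sum_{i \in S} z_i^0 \sprod{h_i^0, A_i^* p^*} = \sum_{i \in S} z_i^0 \sprod{h_i^0, V(i)} \geq \norm{z^0}_1 = 1,
\end{align*}
where the inequality is exactly hypothesis \eqref{eq:SoftCond1}. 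Hence $p^*$ is feasible for the minimisation appearing in Lemma~\ref{lem:AHopt}.

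The key step is then to invoke Lemma~\ref{lem:AHopt}: since its optimal value is at least $1$ and $p^*$ is feasible, we obtain $\max_{i \in [n]} \sprod{\hat h_i, V(i)} \geq 1$, so some index $j$ satisfies $\sprod{\hat h_j, V(j)} \geq 1$. I would next argue that necessarily $j = i^*$: for every $i \neq i^*$, the Cauchy--Schwarz inequality together with $\norm{\hat h_i}_2 = 1$ and \eqref{eq:SoftCond2} gives $\sprod{\hat h_i, V(i)} \leq \norm{V(i)}_2 < 1$, which rules out all such indices. Thus $\sprod{\hat h_{i^*}, V(i^*)} \geq 1$; in particular $V(i^*) \neq 0$.

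From here the angular estimate is routine. Setting $v = V(i^*)/\norm{V(i^*)}_2$ and $\rho = \norm{V(i^*)}_2 > 0$, the inequality just obtained reads $\rho \cos \omega(\hat h_{i^*}, v) \geq 1$, while \eqref{eq:SoftCond4} gives $\rho \cos \alpha \leq 1$. Dividing by $\rho$ yields $\cos \omega(\hat h_{i^*}, v) \geq \cos \alpha$, and since $\cos$ is decreasing on $[0, \pi]$ this forces $\omega(\hat h_{i^*}, v) \leq \alpha$. Combining this with hypothesis \eqref{eq:SoftCond3}, namely $\omega(h_{i^*}^0, v) \leq \alpha$, through the triangle inequality for $\omega$ gives
\begin{align*}
	\omega(\hat h_{i^*}, h_{i^*}^0) \leq \omega(\hat h_{i^*}, v) + \omega(v, h_{i^*}^0) \leq 2\alpha,
\end{align*}
which is the claim.

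I expect the only genuinely delicate point to be the correct use of Lemma~\ref{lem:AHopt}: one has to recognise that its ``$\min \geq 1$'' conclusion allows us to substitute the \emph{specific} certificate $p^*$ and still conclude that the maximum over $i$ is at least one, after which condition \eqref{eq:SoftCond2} localises the active index to $i^*$. Everything else is Cauchy--Schwarz, the monotonicity of $\cos$, and the triangle inequality. A minor bookkeeping issue is the normalisation $\norm{z^0}_1 = 1$ required to match Lemma~\ref{lem:AHopt}, which is harmless by homogeneity of the hypotheses and the conclusion.
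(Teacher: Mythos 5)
Your proposal is correct and follows essentially the same route as the paper: normalise $\norm{z^0}_1=1$, use \eqref{eq:SoftCond1} to make $p^*$ feasible in Lemma~\ref{lem:AHopt}, localise the index with $\sprod{\hat h_i, V(i)}\geq 1$ to $i^*$ via \eqref{eq:SoftCond2}, and finish with \eqref{eq:SoftCond3}, \eqref{eq:SoftCond4} and the triangle inequality for $\omega$. The only difference is cosmetic: the paper phrases the last step as a proof by contradiction (assuming $\omega(\hat h_{i^*},h^0_{i^*})>2\alpha$), whereas you derive $\omega(\hat h_{i^*}, V(i^*)/\norm{V(i^*)}_2)\leq\alpha$ directly, which is arguably a slightly cleaner rendering of the identical argument.
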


\begin{proof}
	Let us begin by noting that we without loss of generality may assume that $\Vert z^0 \Vert_1 =1$. This since only the norms of the columns of the minimizer changes when we scale $z^0$, and not the directions of them.
	
	We have, due to \eqref{eq:SoftCond1}
	\begin{align*}
		\sprod{b,p^*} = \langle\calA(z^0.H^0),p^*\rangle = \sum_{i\in S} z_i^0 \sprod{A_ih_i^0,p^*}  \geq 1.
	\end{align*}
	This, together with Lemma \ref{lem:AHopt} implies that there exists an $i \in [n]$ with $$\langle{A_i^*p^*,\hat{h}_i} \rangle \geq 1.$$ Due to \eqref{eq:SoftCond2}, this index $i$ must be equal to $i^*$. 
	
	Now suppose that  $\omega(\hat{h}_{i^*},h_{i^*}^0)$  is strictly larger than $2 \alpha$. Then by \eqref{eq:SoftCond3} and the triangle inequality of $\omega$
	\begin{align*}
		 \omega(\hat{h}_{i^*}, \tfrac{A_i^*p^*}{\norm{A_i^*p^*}})  \geq \omega(\hat{h}_{i^*},h^0_{i^*}) -\omega\left(h_{i^*}^0, \tfrac{A_{i^*}p^*}{\norm{A_{i^*}p^*}_2}\right) > 2\alpha - \alpha =\alpha ,
	\end{align*}
	which implies
	\begin{align*}
	\langle{A_{i^*}^*p^*,\hat{h}_{i^*}}\rangle= \norm{A_{i^*}^*p^*}_2 \cos( \omega(\hat{h}_{i^*}, \tfrac{A_{i^*}^*p^*}{\norm{A_{i^*}^*p^*}})) \leq \norm{A_{i^*}^*p^*}_2 \cos(\alpha) \leq 1 
	\end{align*}
	due to \eqref{eq:SoftCond4}. This is a contradiction and the proof is finished.
\end{proof}


Let us make some comments on \eqref{eq:SoftCond1}-\eqref{eq:SoftCond4}.

 First, the set defined by the four constraints is convex for $\alpha\leq \tfrac{\pi}{2}$. First, it is easily seen that both $\eqref{eq:SoftCond1}, \eqref{eq:SoftCond2}$ and $\eqref{eq:SoftCond4}$ define convex sets. As for $\eqref{eq:SoftCond3}$, notice that it can be rewritten as 
\begin{align*}
	\langle A_{i^*}^*p,h_{i^*}^0 \rangle \geq \cos(\alpha) \norm{A_i^*p}_2
\end{align*}
If $\alpha \leq \tfrac{\pi}{2}$, $\cos(\alpha)\geq 0$ and hence for $p_1,p_2$ satisfying \eqref{eq:SoftCond3} and $\theta \in (0,1)$, we have
\begin{align*}
	\norm{A_{i^*}^*(\theta p_1 +(1-\theta)p_2)}_2 \cos \alpha &\leq \theta \norm{A_{i^*}^*p_1}_2 \cos \alpha + (1-\theta) \norm{A_{i^*}p_2}_2 \cos \alpha  \\
	&\leq \theta \sprod{A_{i^*}p_1,h_{i^*}^0} + (1-\theta) \sprod{A_{i^*}p_2,h_{i^*}^0} = \sprod{A_{i^*}^*(\theta p_1 + (1-\theta) p_2, h_{i^*}^0}.
\end{align*}

Second, the condition described in Theorem \ref{th:softRecAngle} is in essence weaker than the one described in Proposition \ref{prop:exactRecovery}, at least when dealing with reasonable random matrices. To see this, let us first note that for such matrices, it does not matter if we replace the strict inequality sign in \eqref{eq:SoftCond2} with a ''$\leq$''- this does not change for instance the Gaussian width \cite{Gordon1988} or the statistical dimension \cite{AmelunxLotzMcCoyTropp2014} of the set (see also below). With this slight change, we see that a vector satisfying \eqref{eq:exactCond1} and \eqref{eq:exactCond2} necessarily obeys
\begin{align*}
	\sum_{i \in S} \sprod{h_i^0, A_i^*p^*}z_i^0 &= \sum_{i \in S} \sprod{h_i^0,h_i^0}z_i^0 = \norm{z^0}_1 \\
	\norm{A_i^*p}_2 &\leq 1, \quad i \neq i^* \\
	\omega(h_{i^*}^0, \tfrac{A_{i^*}^*p^*}{\norm{A_{i^*}^*p^*}} ) &= \omega(h_{i^*}^0,h_{i^*}^0)=0\leq \alpha \\
	\norm{A_{i^*}p}_2 \cos(\alpha) &= \norm{h_{i^*}^0}_2\cos(\alpha)= \cos(\alpha) \leq 1.
\end{align*}
Hence, if the exact recovery statement is fulfilled, the soft recovery statement (in essence) is also.

Third, it is not a big problem that we  for every $i \in S$  need to (separately) ensure the existence of one vector in the range of $\calA^*$ fulfilling \ref{eq:SoftCond1}-\eqref{eq:SoftCond4} to ensure that $\omega(h_i^0, \hat{h}_i^0)$ for all $i \in S$. To argue why this is the case, consider this (slightly reformulated) inequality from \cite[Theorem 7.1]{AmelunxLotzMcCoyTropp2014}: for a convex cone $C \sse \R^d$, we have for an $m$-dimensional uniformly distributed random subspace $V$
\begin{align}
	m \geq d - \delta(C) + \sqrt{4d\log\eta^{-1}}\impl \prb{C \cap V \neq {0})}\leq  \eta, \label{eq:statDimProp}
\end{align}
where $\delta(C)$ denotes the \emph{statistical dimension} of the cone $C$. Due to the linear structure of $\ran \calA^*$, there exists a vector in $\ran \calA^*$ fulfilling \ref{eq:SoftCond1}-\ref{eq:SoftCond4} exactly when $\ran \calA^* \cap \calC_{i^*} \neq \set{0}$, where  $\calC_i$ is the cone generated by the constraints \eqref{eq:SoftCond1}-\eqref{eq:SoftCond4}.
\eqref{eq:statDimProp} now tells us that if $V$ is a uniformly distributed random subspace of dimension $m$ in $\R^{k,n}$
\begin{align*}
\prb{ \forall i \in S : \calC_i \cap V \neq {0})} &= 1 - \prb{\exists i \in S : \calC_i \cap V ={0})} \geq 1 - \sum_{i \in S} \prb{\calC_i \cap V ={0}}  \\
&\geq 1 - s \max_{i \in S} \prb{\calC_i \cap V ={0}}  \geq 1 - \eta
\end{align*}
if $m \geq nk - \min_i \delta(\calC_i) + \sqrt{4d\log((s\eta)^{-1})}$. In the case that $\calA$ is a random Gaussian, $\ran \calA$ will have exactly the distribution of $V$ above. Hence, the number of Gaussian measurements needed to secure the existence of a $p$ satisfying \eqref{eq:SoftCond1}-\eqref{eq:SoftCond4} for all $i \in S$ is basically the same amount as the one needed to secure one satisfying the ''hardest'' constraint. The only difference is a factor of $\log$-type.

\subsection{The Statistical Dimension of the Cone Generated by \eqref{eq:SoftCond1} - \eqref{eq:SoftCond4}.} \label{sec:statDim}

We have already discussed the concept of \emph{statistical dimension} $ \delta(C)$ of a convex cone $C \sse \R^d$. It was introduced in \cite{AmelunxLotzMcCoyTropp2014}. The importance of the concept is captured in the formula \eqref{eq:statDimProp} - the statistical dimension provides a threshold amount of Gaussian measurements needed for $\ran A$ to intersect the cone. It can be defined in many ways. The most convenient way is probably to first define it for closed convex cones $C$ as 
\begin{align*}
	 \delta(C) = \erw{\norm{\Pi_C g}_2^2},
\end{align*}
where $\Pi_C$ denotes the (non-linear) orthogonal projection onto $C$, and $g \in \R^d$ a Gaussian vector. For general convex cones $K$, we define $\delta(K)$ as the statistical dimension of the closure of $K$.

  It is hard to give an exact expression for the statistical dimension  of the cone generated by \eqref{eq:SoftCond1} - \eqref{eq:SoftCond4} (let us denote it by $\calC_{i^*}$, as we did above).
In this section, we will prove a lower bound of $\delta (\calC_{i^*})$, which through \eqref{eq:statDimProp} provides an upper bound on the amount of measurements needed to guarantee soft recovery. This bound is also not a closed expression, but one that easily can be calculated numerically. The first step of the argumentation will be to identify a subset of $\calC_{i^*}$ which is a convex cone whose statistical dimension is easier to handle. Let us begin by describing the subset.
\begin{lem}
Let $\alpha < \tfrac{\pi}{2}$. Then, $\calC_{i^*}$ contains $\cone \calM_{i^*}$, where $\calM_{i^*} \sse \R^{k,n}$ is given by the equations
\begin{align*}
	\norm{V_i}_2 &< 1 \quad i \neq i^* \\
	\sprod{V_i, h_i^0} &\geq \sigma \quad i \in S \backslash \set{i^*} \\
	\norm{V_{i^*}}_2  &\leq \tfrac{1}{\cos \alpha} \quad  \\
	\sprod{V_{i^*},h_{i^*}^0} &\geq \tfrac{\sigma}{\cos \alpha}.
\end{align*}
$\sigma$ is a parameter defined through
\begin{align*}
	\sigma := \frac{1}{1+(\tfrac{1}{\cos(\alpha)} -1 ) \tfrac{z_i^0}{\norm{z^0}_1}}.
\end{align*}
\end{lem}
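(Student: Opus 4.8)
The plan is to prove the inclusion at the level of the generating sets and then pass to conic hulls. By definition $\calC_{i^*}=\cone C$, the conic hull of the convex set $C\sse\R^{k,n}$ cut out by the four inequalities \eqref{eq:SoftCond1}--\eqref{eq:SoftCond4} (convexity of $C$ for $\alpha<\tfrac{\pi}{2}$ was noted after Theorem \ref{th:softRecAngle}); hence $\calC_{i^*}$ is a convex cone containing $C$. Consequently it suffices to prove the plain set inclusion $\calM_{i^*}\sse C$: once $\calM_{i^*}\sse C\sse\calC_{i^*}$ is established, $\calC_{i^*}$ being a convex cone forces $\cone\calM_{i^*}\sse\calC_{i^*}$. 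I would therefore fix an arbitrary $V\in\calM_{i^*}$ and verify that $V$ itself, without any rescaling, already satisfies \eqref{eq:SoftCond1}--\eqref{eq:SoftCond4}.

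Three of the four constraints are quick. Constraint \eqref{eq:SoftCond2} is literally one of the defining inequalities of $\calM_{i^*}$, and \eqref{eq:SoftCond4} follows immediately from $\norm{V_{i^*}}_2\le\tfrac{1}{\cos\alpha}$. For \eqref{eq:SoftCond3} I would use its equivalent inner-product form $\sprod{V_{i^*},h_{i^*}^0}\ge\cos\alpha\,\norm{V_{i^*}}_2$ recorded in the remark after Theorem \ref{th:softRecAngle}. The right-hand side is at most $\cos\alpha\cdot\tfrac{1}{\cos\alpha}=1$, while the defining inequality of $\calM_{i^*}$ gives $\sprod{V_{i^*},h_{i^*}^0}\ge\tfrac{\sigma}{\cos\alpha}$, so it suffices to check that $\sigma\ge\cos\alpha$. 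This last inequality drops out of the definition of $\sigma$: since $0\le\tfrac{z_{i^*}^0}{\norm{z^0}_1}\le 1$ and $\tfrac{1}{\cos\alpha}-1\ge0$, the denominator of $\sigma$ is at most $\tfrac{1}{\cos\alpha}$, whence $\sigma\ge\cos\alpha$.

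The substantial step is \eqref{eq:SoftCond1}. Here I would split the weighted sum over $S$ into the $i^*$-term and the remaining indices, insert the lower bounds $\sprod{V_i,h_i^0}\ge\sigma$ for $i\in S\backslash\set{i^*}$ and $\sprod{V_{i^*},h_{i^*}^0}\ge\tfrac{\sigma}{\cos\alpha}$, and use $\sum_{i\in S\backslash\set{i^*}}z_i^0=\norm{z^0}_1-z_{i^*}^0$. This produces the lower bound $\sigma\big(\norm{z^0}_1+(\tfrac{1}{\cos\alpha}-1)z_{i^*}^0\big)$, and the point is that $\sigma$ has been reverse-engineered so that the factor in parentheses equals exactly $\norm{z^0}_1/\sigma$; the whole bound therefore collapses to $\norm{z^0}_1$, which is precisely \eqref{eq:SoftCond1}.

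The main obstacle is not any single estimate but recognizing the role of the parameter $\sigma$: it is the unique value making the weighted combination in \eqref{eq:SoftCond1} balance to $\norm{z^0}_1$, and — as I would flag explicitly — this same $\sigma$ automatically satisfies $\sigma\ge\cos\alpha$, which is exactly what \eqref{eq:SoftCond3} requires. Once these two observations are in place the rest is bookkeeping over the four inequalities. It is worth recording that for the computation to close, the weight appearing in the definition of $\sigma$ must be the one attached to the distinguished index, i.e.\ $z_{i^*}^0$.
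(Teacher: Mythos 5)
Your proof is correct and follows essentially the same route as the paper's: verify \eqref{eq:SoftCond2} and \eqref{eq:SoftCond4} directly, reduce \eqref{eq:SoftCond3} to the inequality $\sigma \geq \cos\alpha$, and observe that $\sigma$ is defined precisely so that the weighted sum in \eqref{eq:SoftCond1} collapses to $\norm{z^0}_1$. Your two explicit side remarks are also accurate: passing to conic hulls is automatic once $\calM_{i^*}$ is shown to lie in the constraint set, and the weight in the definition of $\sigma$ must indeed be read as $z_{i^*}^0$ (the subscript $i$ in the paper's statement is a typo).
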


\begin{proof}
	 Let $V \in \calM_{i^*}$. Then trivally \eqref{eq:SoftCond2} and \eqref{eq:SoftCond4} are satisfied. As for \eqref{eq:SoftCond1}, we have
	 \begin{align*}
	 	\sum_{i \in S} \sprod{h_i^0, V_i}z_i^0 &\geq z_{i^*}^0 \cdot \tfrac{\sigma}{\cos \alpha} + \sum_{i \in S\backslash{i^*}} \sigma z_{i^*}^0   = z_{i^*}^0 \cdot \tfrac{\sigma}{\cos \alpha} + \sigma (\norm{z^0}_1 -z_{i^*}^0) = \norm{z^0}_1 .
\end{align*}	
As for \eqref{eq:SoftCond3}, we need to prove that $\sprod{V_{i^*},h_{i^*}^0} \geq  \norm{V_{i^*}}_2 \cos \alpha$. For this, it suffices to show that $\sigma \geq \cos \alpha$, since we already know that $\sprod{V_{i^*},h_{i^*}^0} \geq \tfrac{\sigma}{\cos \alpha}$ and $\norm{V_{i^*}}_2 \leq \tfrac{1}{\cos \alpha}$. This is however not hard: $\sigma \geq \cos \alpha$ is equivalent to
\begin{align*}
1 \geq \cos\alpha +\tfrac{z_i^0}{\norm{z^0}_1} ( 1- \cos \alpha) = \tfrac{z_i^0}{\norm{z^0}_1} + \cos \alpha  \left(1-\tfrac{z_i^0}{\norm{z^0}_1}\right),
\end{align*}
which is true, since $\cos \alpha \leq 1$.

\end{proof}

With the help of the previous lemma and the definition of the statistical dimension of a cone, we derive the following lower bound for $\delta(\calC_{i^*})$:
\begin{align}
	\delta(\calC_{i^*}) \geq \delta( \cone \calM_{i^*} ) = \erw{\norm{\Pi_{\cone \calM_i}G}_2^2} = nk - \erw{ \inf_{\tau >0} \dist(G - \tau \calM_{i^*})^2} \geq nk - \inf_{\tau>0} \erw{ \dist(G - \tau \calM_{i^*})^2}, \label{eq:CLowBound}
\end{align}
where $G \in \R^{k,n}$ is Gaussian. We used the Lemma of Fatou and the following trick from \cite{AmelunxLotzMcCoyTropp2014}: for $g \in \R^d$ Gaussian and $C \sse \R^d$, we have
\begin{align*}
	\erw{\norm{\Pi_C g}_2^2}= \erw{ \norm{g}_2^2 - \min_{c \in C} \norm{g-c}_2^2} = d- \erw{\min_{c \in C} \norm{g-c}_2^2}.
\end{align*}
In other words, the task of bounding $\delta(\calM_{i^*})$ from below can be accomplished by bounding $\inf_{\tau>0} \erw{ \dist(G - \tau \calM_{i^*})^2}$ above. In order to solve this task, let us first have a look at the function  $V \to \dist(V - \tau \calM_{i^*})^2$.

\begin{lem} \label{lem:Mproj}
	Define $\beta$ through $\sigma = \cos(\beta)$ and let $V \in \R^{k,n}$ Then
	\begin{align*}
		\dist(V - \tau \calM_{i^*})^2 = \sum_{i \in [n]} d_i(V_i),
	\end{align*}
	where $d_i: \R^n \to \R$ are defined through
	\begin{align*}
		d_i(v) &= \pos(\norm{v}_2-\tau)^2  &  i \notin S \\
		d_i(v) &= \begin{cases} (\langle v,h_i^0\rangle-\tau \cos \beta)^2 + \pos(\norm{\Pi_{\sprod{h_i^0}^\perp} v }_2 - \tau \sin \beta)^2 &\text{ if } \norm{\Pi_{\sprod{h_i^0}^\perp}v}_2 \geq \tan \beta \langle h_i^0, v\rangle \\
		&\text{ or } \sprod{v, h_i^0}\leq \tau \cos \beta	\\
		\pos(\norm{v}_2 - \tau)^2	 & \text{ else.}
		\end{cases} & i \in S \backslash \set{i^*} \\
		d_{i^*}(v) &= \begin{cases} (\langle v,h_i^0\rangle-\tau \tfrac{\cos \beta}{\cos{\alpha}})^2 + \pos(\norm{\Pi_{\sprod{h_i^0}^\perp} v }_2 - \tau \tfrac{\sin \beta}{\cos \alpha})^2 &\text{ if } \norm{\Pi_{\sprod{h_i^0}^\perp}v}_2 \geq \tan \beta \langle h_i^0, v\rangle \\
		&\text{ or } \sprod{v, h_i^0}\leq \tau\tfrac{\cos \beta}{\cos \alpha}	\\
		\pos(\norm{v}_2 - \tfrac{\tau}{\cos \alpha})^2	 & \text{ else.}
		\end{cases}
		\end{align*}
\end{lem}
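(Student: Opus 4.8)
The plan is to exploit the fact that both the objective (the squared Euclidean norm on $\R^{k,n}$) and the set $\tau\calM_{i^*}$ split across columns, thereby reducing the computation to $n$ independent planar projection problems. First I would note that the distance to a set equals the distance to its closure, so the strict inequality $\norm{V_i}_2<1$ in the definition of $\calM_{i^*}$ may be replaced by ``$\leq$'' without affecting $\dist(V,\tau\calM_{i^*})$. Writing $W\in\tau\calM_{i^*}$ iff $W/\tau\in\calM_{i^*}$ and using $\sigma=\cos\beta$, the defining constraints become, column by column: $\norm{W_i}_2\leq\tau$ for $i\notin S$; $\norm{W_i}_2\leq\tau$ together with $\sprod{W_i,h_i^0}\geq\tau\cos\beta$ for $i\in S\setminus\set{i^*}$; and $\norm{W_{i^*}}_2\leq\tfrac{\tau}{\cos\alpha}$ together with $\sprod{W_{i^*},h_{i^*}^0}\geq\tfrac{\tau\cos\beta}{\cos\alpha}$ for $i^*$. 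Since $\norm{V-W}_2^2=\sum_i\norm{V_i-W_i}_2^2$ and the constraint on $W_i$ involves $W_i$ only, the minimization decouples and $\dist(V,\tau\calM_{i^*})^2=\sum_i d_i(V_i)$ with $d_i(v)=\min\set{\norm{v-w}_2^2 : w\in C_i}$, where $C_i$ is the per-column constraint set.

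It then remains to evaluate each $d_i$. For $i\notin S$, $C_i$ is the centred ball of radius $\tau$, whose projection gives the familiar $\pos(\norm{v}_2-\tau)^2$. For the two remaining types of column, $C_i$ is the intersection of a centred ball with a half-space $\sprod{w,h_i^0}\geq c$ (a spherical cap), and I would treat both at once, with $(\rho,c)=(\tau,\tau\cos\beta)$ for $i\in S\setminus\set{i^*}$ and $(\rho,c)=(\tfrac{\tau}{\cos\alpha},\tfrac{\tau\cos\beta}{\cos\alpha})$ for $i^*$. The key simplification is to decompose $v=a\,h_i^0+u$ with $a=\sprod{v,h_i^0}$, $u=\Pi_{\sprod{h_i^0}^\perp}v$, $r=\norm{u}_2$, and likewise $w=b\,h_i^0+u'$. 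Because $\norm{v-w}_2^2=(a-b)^2+\norm{u-u'}_2^2$ while the constraints see only $b$ and $\norm{u'}_2$, the optimal $u'$ is a non-negative multiple of $u$; this collapses the $k$-dimensional projection to the planar problem of minimizing $(a-b)^2+(r-s)^2$ over $\set{(b,s) : b\geq c,\ b^2+s^2\leq\rho^2,\ s\geq0}$.

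The heart of the argument is this planar case analysis, which I would organise by the geometry of the cap-shaped feasible region. Its boundary consists of the circular arc $b^2+s^2=\rho^2$ (spanning angles $\theta\in[0,\beta]$ from the $h_i^0$-axis, since the corner where the arc meets the line $b=c$ sits at $(b,s)=(c,\rho\sin\beta)$ and satisfies $s/b=\tan\beta$), the flat facet $b=c$, and the segment $s=0$. Computing the normal cones at the facet, at the corner and along the arc shows that the nearest point lies on the facet or the corner precisely when $r\geq\tan\beta\cdot a$ or $a\leq c$, in which case the projection is $(b,s)=(c,\min(r,\rho\sin\beta))$ and the squared distance equals $(a-c)^2+\pos(r-\rho\sin\beta)^2$; otherwise the nearest point is the radial projection onto the sphere (or $v$ is already feasible), giving $\pos(\norm{v}_2-\rho)^2$. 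Substituting the two parameter choices reproduces the stated formulas for $d_i$ and $d_{i^*}$, the angular threshold $r\geq\tan\beta\cdot a$ being identical in both cases because scaling $\rho$ and $c$ by $1/\cos\alpha$ leaves the cap's half-angle $\beta$ unchanged. I expect this planar case distinction --- in particular verifying that the two stated conditions exactly separate the ``project onto the facet/corner'' region from the ``project onto the sphere'' region --- to be the main obstacle; the decoupling and the pure ball case are routine.
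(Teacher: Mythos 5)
Your proposal is correct and takes essentially the same route as the paper: decouple the squared distance column by column, handle the ball-constrained columns directly, and compute the projection onto the spherical-cap set by a case analysis in the plane spanned by $h_i^0$ and $v$, where the paper's cases (first on whether $\sprod{v,h_i^0}\leq\tau\cos\beta$, then on whether the angle to $h_i^0$ exceeds $\beta$) carve up the plane exactly as your facet/corner-versus-arc dichotomy does. Your explicit reduction to a planar problem, the normal-cone bookkeeping, and the remark about passing to the closure of $\calM_{i^*}$ merely make rigorous what the paper argues informally with a figure.
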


\begin{proof}
	First, it is immediately clear that 
	\begin{align*}
	\dist(V - \tau \calM_{i^*})^2 = \min_{\substack{\norm{w}_2 \leq \tfrac{\tau}{\cos \alpha} \\ \sprod{w, h_i^0}\geq \tau \tfrac{\cos \beta }{\cos \alpha}}} \norm{v_{i*} -w}_2^2+ \sum_{ i \in S \backslash \set{i^*}} \min_{\substack{\norm{w}_2 \leq \tau \\ \sprod{w, h_i^0}\geq \tau \cos \beta }} \norm{v_i -w}_2^2 + \sum_{i \notin S} \min_{\norm{w}_2 \leq \tau} \norm{v_i -w}_2^2 .
	\end{align*}
	It is also easy to see that $\min_{\norm{w}_2 \leq \tau} \norm{v_i -w}_2^2 = \pos(\norm{v_i}_2-\tau)^2$ (we can choose $w$ parallel to $v_i$ and exhaust as much of its length as possible - in particular its whole length if $\norm{v_i}_2 \leq \tau$). For the other expressions, we have to be a little more careful. We will only treat the case that $i \in S \backslash \set{i^*}$ - the case $i = i^*$ is analogous (the only difference are extra $\tfrac{1}{\cos \alpha}$-factors appearing everywhere, whence we choose to omit that case).
	
		To calculate $$\min_{\substack{\norm{w}_2 \leq \tau \\ \sprod{w, h_i^0}\geq \tau \cos \beta }} \norm{v -w}_2^2,$$ we distinguish two cases. A graphical depiction of the argumentation in each case can be found in Figure \ref{fig:MProj}
	\begin{figure}
		
		\centering
		\includegraphics[scale=.3]{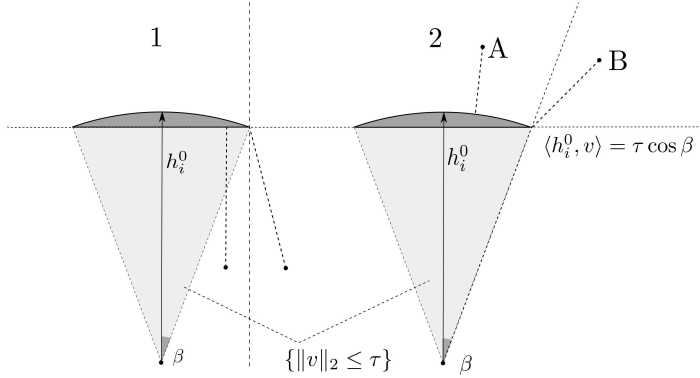}
		\caption{\label{fig:MProj} Graphical depiction of the argumentation in the Proof of Lemma \ref{lem:Mproj}}.
	\end{figure}

	{\bf Case 1 : $\sprod{v, h_i^0}\leq \tau \cos(\beta)$.} In this case, the minimizer of $\norm{v-w}_2$ must lie on the $(k-1)$-dimensional disc  $$\set{w \ \vert \ \sprod{w,h_i^0}= \tau \cos \beta, \ \norm{ \Pi_{\sprod{h_i^0}^\perp} w}_2 \leq \tau \sin \beta}.$$ With this insight, it is clear that the minimum is given as claimed.
	
	{\bf Case 2:  $\sprod{v, h_i^0}\geq \tau \cos(\beta)$.} In this case, the minimizer must lie on the spherical cap $\norm{w}_2=1, \omega(w, h_i^0)\leq \beta$. This makes it clear that the minimum depends on the angular distance between $\tfrac{v}{\norm{v}_2}$ and $h_i^0$. 
	
	If the angle is smaller than $\beta$, we can choose $w$ parallel to $v$, and the smallest possible separation is equal to $\pos(\norm{v}_2 - \tau)$. 
	 In the other case, we may choose $w$ in the plane spanned by $h_i^0$ and $v$ with an angular distance to $h_i^0$ at most $\beta$. The minimizer is thus equal to $\tau (\cos \beta \ h_i^0 + \sin \beta \ \Pi_{\sprod{h_i^0}^\perp}v)$, and the minimum equal to the claimed expression.
\end{proof}

From the proof of the last lemma, it is evident that we can estimate for $i \in S \backslash \set{i^*}$
\begin{align} \label{eq:diEst}
	d_i(v) \leq (\langle v,h_i^0\rangle-\tau \cos \beta)^2 + \pos(\norm{\Pi_{\sprod{h_i^0}^\perp} v }_2 - \tau \sin \beta)^2
\end{align}
This may seem crude, but we in fact only make a non-strict estimate on the set $\set{v \vert \omega( \tfrac{v}{\norm{v}_2}, h_i^0)<\beta, \sprod{v,h_i^0} \geq \tau \cos(\beta)}$. This set has a very small Gaussian measure for small angles $\beta$ and large dimensions $k$. An analogous estimate can be made for $i=i^*$.

This final estimate allows us to prove the following proposition.

\begin{prop}
	Let $G\in \R^{k,n}$ be Gaussian. We have
	\begin{align*}
		\erw{\dist(G- \tau \calM_i)^2} \leq \Phi_{k,s,n,\alpha,\beta}(\tau)
		\end{align*}
		Where
		\begin{align*}
		\Phi_{k,s,n,\alpha,\beta}(\tau) := s + \tau^2 \tfrac{\cos^2\beta}{\cos^2 \alpha} + \erw{ \pos(\norm{g^{k-1}}_2 - \tau \tfrac{\sin \beta }{\cos\alpha})^2} &+ (s-1) \left( \tau^2 \cos^2\beta + \erw{ \pos(\norm{g^{k-1}}_2 - \tau \sin \beta )^2}\right) \\
		&+ (n-s) \erw{ \pos(\norm{g^k}_2-\tau)^2}
\end{align*}
where $g^d$ denotes an $d$-dimensional Gaussian and $s = \abs{S}$.

In particular, if $\calA : \R^{k,n} \to \R^m$ is Gaussian with $m \geq \inf_{\tau >0} \Phi_{k,s,n,\alpha,\beta}(\tau)$, $2\alpha$-soft recovery of the $i^*$-column is guaranteed with high probability.
\end{prop}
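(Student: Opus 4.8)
The plan is to assemble the proposition directly from Lemma \ref{lem:Mproj} and the estimate \eqref{eq:diEst}, since the geometric heavy lifting has already been done there. First I would apply Lemma \ref{lem:Mproj} to write $\dist(G - \tau\calM_{i^*})^2 = \sum_{i\in[n]} d_i(G_i)$ and take expectations, exploiting linearity together with the fact that the columns $G_i$ of $G$ are independent $k$-dimensional standard Gaussians. This reduces the task to bounding $\erw{d_i(G_i)}$ for each of the three types of index separately.

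For $i \notin S$ the identity $d_i(v) = \pos(\norm{v}_2 - \tau)^2$ gives $\erw{d_i(G_i)} = \erw{\pos(\norm{g^k}_2 - \tau)^2}$ at once, producing the $(n-s)$-term. For $i \in S\setminus\set{i^*}$ I would discard the case distinction in $d_i$ via \eqref{eq:diEst}, using $d_i(v) \leq (\sprod{v,h_i^0} - \tau\cos\beta)^2 + \pos(\norm{\Pi_{\sprod{h_i^0}^\perp}v}_2 - \tau\sin\beta)^2$. The key computation is the orthogonal decomposition of $G_i$ along and across $h_i^0$: since $h_i^0$ is a unit vector, $\sprod{G_i, h_i^0}$ is a standard one-dimensional Gaussian and $\Pi_{\sprod{h_i^0}^\perp}G_i$ is an independent $(k-1)$-dimensional Gaussian. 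Hence $\erw{(\sprod{G_i,h_i^0} - \tau\cos\beta)^2} = 1 + \tau^2\cos^2\beta$ and $\erw{\pos(\norm{\Pi_{\sprod{h_i^0}^\perp}G_i}_2 - \tau\sin\beta)^2} = \erw{\pos(\norm{g^{k-1}}_2 - \tau\sin\beta)^2}$, so each such index contributes $1 + \tau^2\cos^2\beta + \erw{\pos(\norm{g^{k-1}}_2 - \tau\sin\beta)^2}$. The index $i = i^*$ is handled with the analogous estimate carrying the extra $\tfrac{1}{\cos\alpha}$-factors, contributing $1 + \tau^2\tfrac{\cos^2\beta}{\cos^2\alpha} + \erw{\pos(\norm{g^{k-1}}_2 - \tau\tfrac{\sin\beta}{\cos\alpha})^2}$. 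Summing the single $1$ from $i^*$ with the $(s-1)$ unit variances from the remaining support indices yields the leading term $s$, and collecting the remaining pieces reproduces $\Phi_{k,s,n,\alpha,\beta}(\tau)$ exactly.

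For the final claim I would chain the first part with \eqref{eq:CLowBound}: taking the infimum over $\tau$ in $\erw{\dist(G-\tau\calM_{i^*})^2}\leq \Phi_{k,s,n,\alpha,\beta}(\tau)$ and substituting into \eqref{eq:CLowBound} gives $nk - \delta(\calC_{i^*}) \leq \inf_{\tau>0}\Phi_{k,s,n,\alpha,\beta}(\tau)$. Applying the statistical-dimension bound \eqref{eq:statDimProp} with $d = nk$ and $C = \calC_{i^*}$, the condition $m \geq \inf_{\tau>0}\Phi_{k,s,n,\alpha,\beta}(\tau) + \sqrt{4nk\log\eta^{-1}}$ forces $\ran\calA^* \cap \calC_{i^*} \neq \set{0}$ with probability at least $1-\eta$; any $p$ witnessing this intersection yields a certificate $V = \calA^*p$ satisfying \eqref{eq:SoftCond1}--\eqref{eq:SoftCond4}, whence Theorem \ref{th:softRecAngle} delivers $\omega(\hat h_{i^*}, h_{i^*}^0)\leq 2\alpha$. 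The ``$m \geq \inf_{\tau>0}\Phi_{k,s,n,\alpha,\beta}(\tau)$'' in the statement is the leading-order version of this bound, the $\sqrt{4nk\log\eta^{-1}}$ term being the lower-order price paid for ``with high probability''.

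The argument is essentially bookkeeping, so I expect no deep obstacle; the one place demanding genuine care is the expectation of the crossed term in \eqref{eq:diEst}, that is, verifying that $\sprod{G_i,h_i^0}$ and $\Pi_{\sprod{h_i^0}^\perp}G_i$ are independent with the stated marginals, and then checking that the $(s-1)$ unit variances together with the one coming from $i^*$ combine to exactly the constant $s$. An off-by-one in that final count, producing $s-1$ or $s+1$ instead, is the easiest slip to make.
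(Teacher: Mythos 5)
Your proposal is correct and takes essentially the same route as the paper's proof: decompose $\erw{\dist(G-\tau\calM_{i^*})^2}$ column-wise via Lemma \ref{lem:Mproj}, bound the support terms with \eqref{eq:diEst} using that $\sprod{G_i,h_i^0}$ is a standard Gaussian and $\Pi_{\sprod{h_i^0}^\perp}G_i$ a $(k-1)$-dimensional Gaussian, and obtain the second claim by chaining \eqref{eq:CLowBound} with \eqref{eq:statDimProp} and Theorem \ref{th:softRecAngle}. You merely fill in details the paper compresses into ``a similar calculation'' and ``a trivial consequence'' (note that independence of the two Gaussian components is not even needed, since \eqref{eq:diEst} is additive and linearity of expectation suffices), and your bookkeeping, including the constant $s$, matches $\Phi_{k,s,n,\alpha,\beta}$ exactly.
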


\begin{proof}
	We have due to Lemma \ref{lem:Mproj}
	\begin{align*}
	\erw{\dist(G- \tau \calM_i)^2} =  \sum_{i \in [n]} \erw{d_i(G_i)} = \erw{d_{i^*}(G_{i^*}) }+ \sum_{i \in S \backslash \set{i^*}} \erw{d_i(G_i)} + (n-s) \erw{ \pos(\norm{g^k}_2-\tau)^2},
	\end{align*}
	since each column of a Gaussian $G \in \R^{k,n}$ is a $k$-dimensional Gaussian. \eqref{eq:diEst} furthermore implies for $i \in S \backslash \set{i^*}$
	\begin{align*}
		\erw{d_i(G_i)} \leq  \erw{ \langle G_i,h_i^0\rangle^2} - 2 \tau \cos \beta \ \erw{\langle G_i, h_i^0 \rangle}+& \tau^2 \cos^2 \beta + \erw{ \pos(\norm{\Pi_{\sprod{h_i^0}^\perp} G_i }_2 - \tau \sin \beta)^2} \\
		& =1 + \tau^2 \cos^2 \beta + \erw{ \pos(\norm{g^{k-1}}_2 - \tau \sin \beta )^2},
	\end{align*}
	where in the last step we used that projections of Gaussians again are Gaussians. By performing a similar calculation for $\erw{d_{i^*}(G_{i^*})}$ and summing all of the terms, we arrive at the first statement.
	
	The second statement is now a trivial consequence of the first one together with the lower bound \eqref{eq:CLowBound} and the statement \eqref{eq:statDimProp}.
	
\end{proof}

\begin{rem}
By putting $\alpha=0$ (which implies $\beta=0$), we arrive at a corresponding formula for the amount of measurements needed for exact recovery. Going through all of the calculations in this section again, one sees that in the case $\alpha=0$, all inequalities are in fact equalities, and we hence arrive at the exact value for the statistical dimension in this case. We may even simplify (since $\erw{ \pos(\norm{g^{k-1}}_2 - \tau \cdot 0 )^2}=k-1)$ 
\begin{align*}
	\Phi_{k,s,n,0,0}(\tau) = sk + s\tau^2 + (n-s) \erw{\pos(\norm{g^{k}}_2 -\tau)^2}.
\end{align*}
\end{rem}

Using computer software, one can calculate $m_{k,s,n,\alpha,\beta}=\inf_{\tau >0} \Phi_{k,s,n, \alpha, \beta}(\tau)$. Let us discuss some of the qualitative behaviours one can read out of these numerical evaluations.

\begin{figure}
\centering
	\includegraphics[scale=.3]{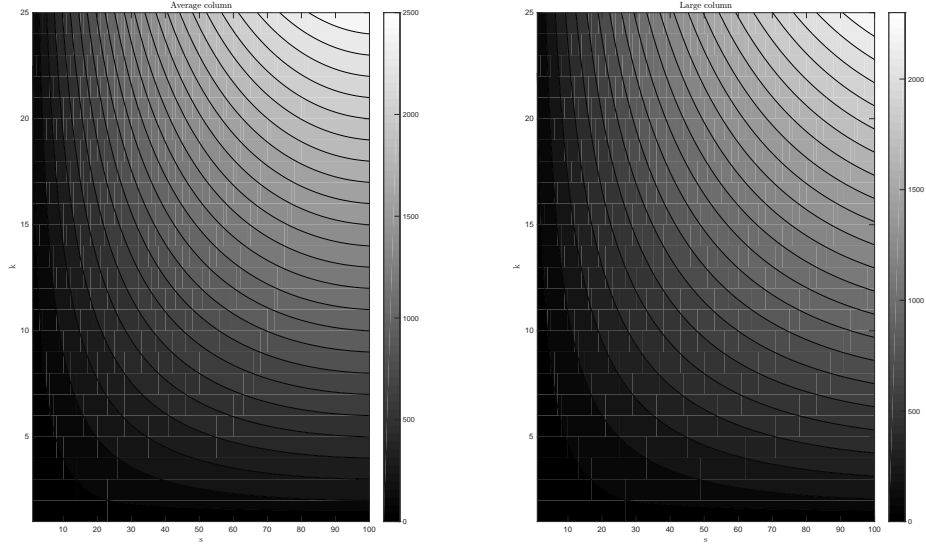}
	\caption{$m_{k,s,n,\alpha,\beta}$ depending on $k$ and $s$ for average (left) and large (right) column. $n=100$, $\alpha=\tfrac{\pi}{10}$. \label{fig:PhaseTransDiag}}
\end{figure}

\begin{figure}
\begin{minipage}[b]{.45\textwidth}

\centering
\includegraphics[scale=.31]{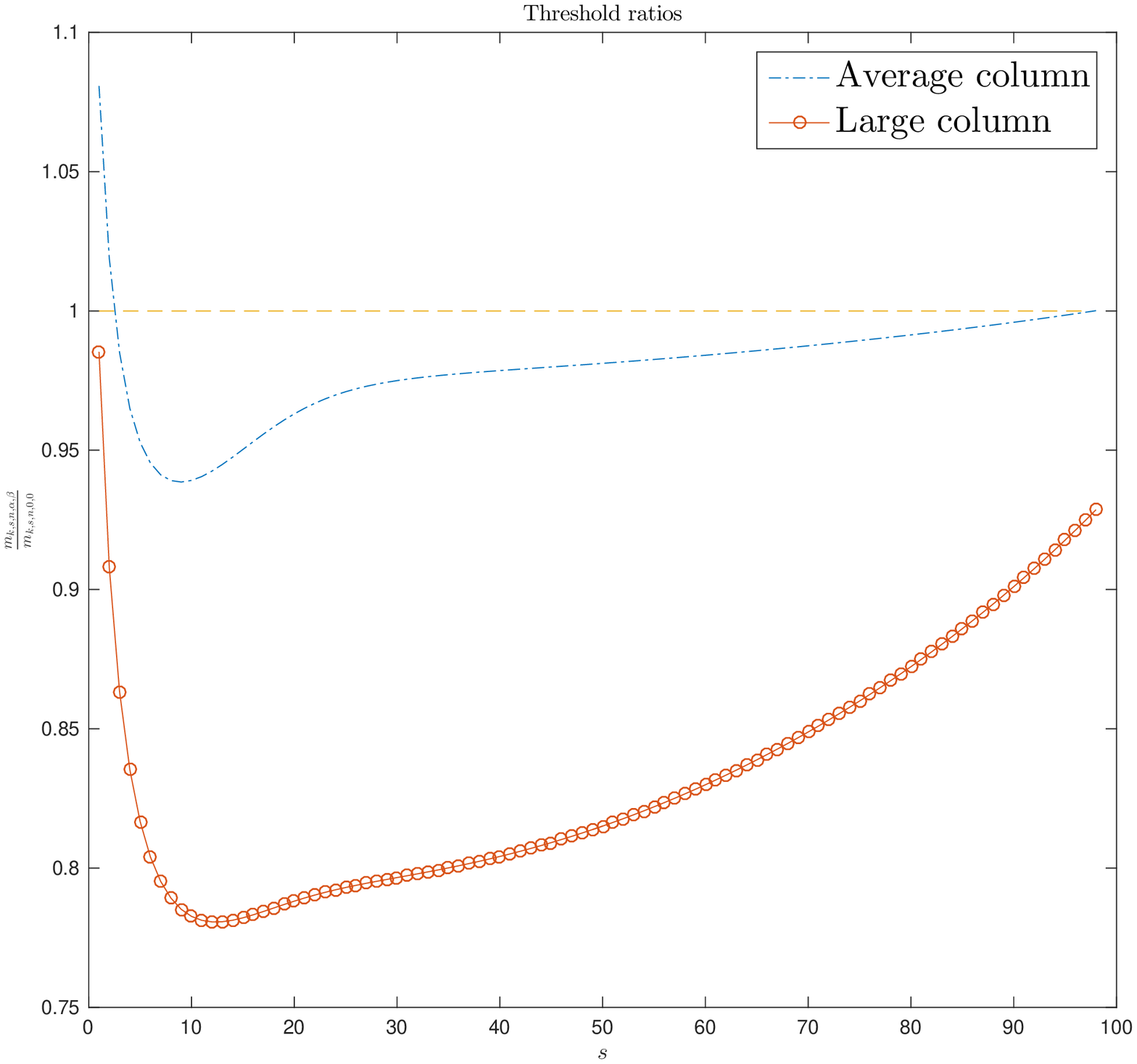}
\caption{The relation between $m_{k,s,n,\alpha,\beta}$ and $m_{k,s,n,0,0}$ for $k=10$, $n=100$, different values of $s$ and two types of columns. \label{fig:ThresholdRatios}}
\end{minipage} \quad
\begin{minipage}[b]{.45 \textwidth}

\centering \includegraphics[scale=.28]{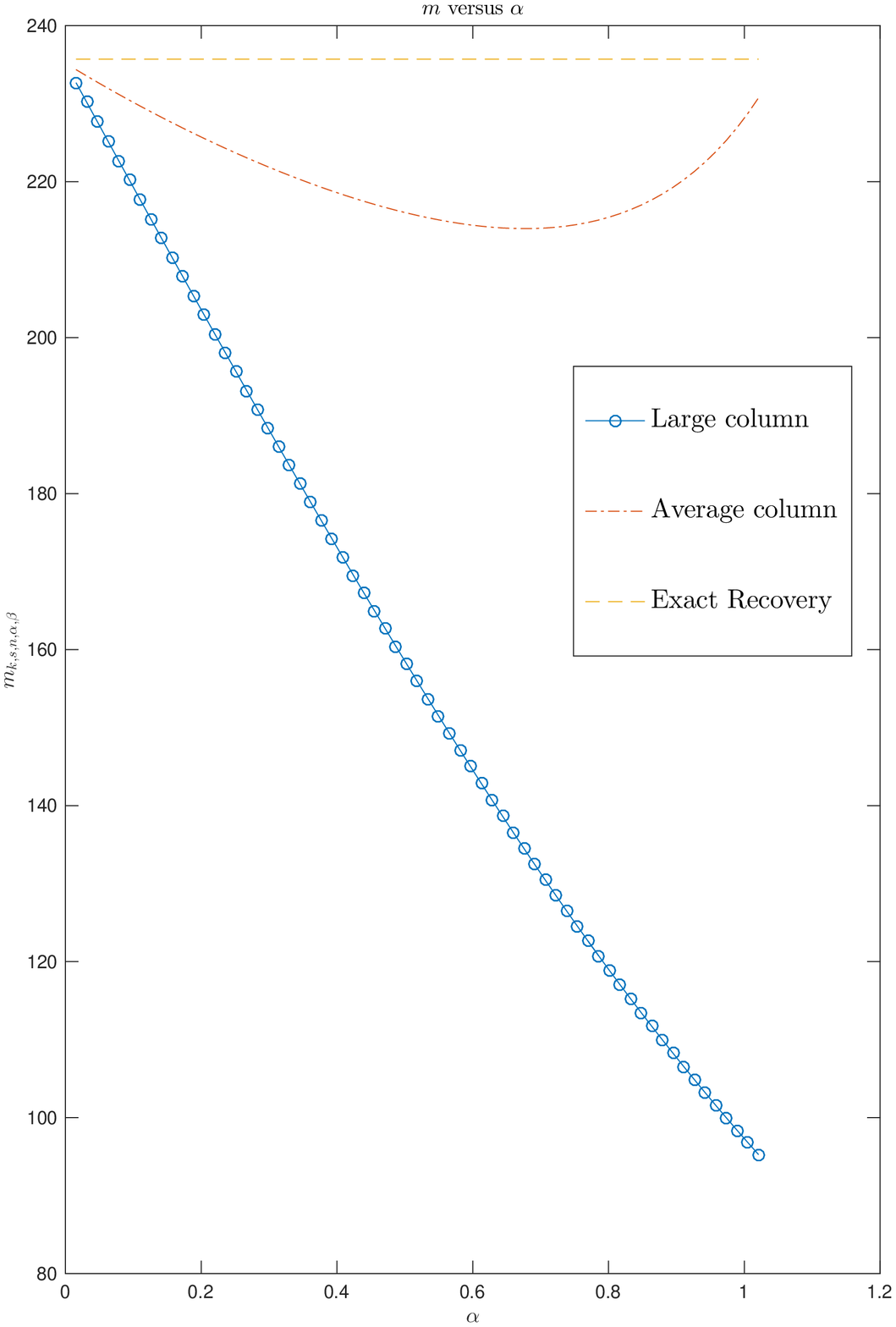}\caption{$m_{s,n,k,\alpha,\beta}$ for $k=s=10$, $n=100$ depending on $\alpha$ for two types of columns.\label{fig:angleReaction}}

\end{minipage} \end{figure}

 In Figure \ref{fig:PhaseTransDiag}, we fix $\alpha = \tfrac{\pi}{10}$, $n=100$ and  plot $m_{k,s,n,\alpha,\beta}$  for varying $k$ and $s$. We investigate two situations: In the left plot, we set $\tfrac{z^0(i^*)}{\norm{z^0}_1}=\tfrac{1}{s}$ -- this corresponds to  an ''average size''-column of $Z_0$. In the right, we instead fix $\tfrac{z^0(i^*)}{\norm{z^0}_1}=.9$ -- this corresponds to a large column. With the exception of $s=1$ (for which $1/s >.9$), $m_{k,s,n,\alpha,\beta}$ is larger for the average size column case than for the large column case, as is intuitively clear - it should be easier to recover (the more significant) large columns. We see that the isolines in both plots resemble hyperbolas $s \cdot k = \text{const.}$, which indicates that $m_{k,s,n,\alpha,\beta}$ scales with $s\cdot k$ rather than $s+k$ - that indicates soft recovery needs asymptotically as many measurements as exact recovery. This is also not surprising - after all, the results obtained in this section do not use any low-rank assumptions at all.

In practice however, also non-asymptotic reductions are relevant. This aspect is dealt with in Figure \ref{fig:ThresholdRatios}, where we fix $k=10$, $n=100$, $\alpha = \tfrac{\pi}{10}$ and plot the quotient $\tfrac{m_{k,s,n,\alpha,\beta}}{m_{k,s,n,0,0}}$  - for an average size and large column, respectively. Note that $m_{k,s,n,0,0}$ exactly corresponds to the threshold amount of measurements needed for exact recovery. Also note that since $m_{k,s,n,\alpha,\beta}$ only is an upper bound on the amount of measurements needed, it is not a contradiction that it is larger than $m_{k,s,n,0,0}$ for small values of $s$. We see that in particular in the case of a large column, we need considerably less measurements to ensure soft recovery than to ensure exact recovery.

Finally, in Figure \ref{fig:angleReaction}, we fix $k=s=10$ and $n=100$ and plot $m_{k,s,n,\alpha,\beta}$ depending on the size of $\alpha$ for an average size and large column. We see that at least for small $\alpha$, our result provides a smaller upper bound on the measurements needed for soft recovery than for exact recovery. Note that $m_{k,s,n,\alpha,\beta}$ is growing for large $\alpha$ (for really large $\alpha$, it even surpasses $m_{k,s,n,0,0}$. This is again not a contradiction - we have only provided an upper bound, and we already addressed that some of the estimation we make become worse as $\alpha$ (and therefore also $\beta$) grows.


\subsection{Energy Concentration}

The condition for soft recovery described above is not quite enough to secure that not only the directions, but also the magnitudes, of the columns in the minimizer $\widehat{Z}$ are close to the ones in $Z_0$, but almost. The following proposition holds.

\begin{prop}
Let $Z_0=z_0.H_0$ be supported on the set $S$ and  $\hat{z}.\widehat{H}$ be the minimizer of the program $\calP_{1,2}$ with $b= \calA(z_0.H_0)$. Assume that for each $i \in S$, $\omega(h^0_i, \hat{h}_i) \leq \alpha$, that there for some $i^*$ exists a vector $p \in \R^m$ with the properties \eqref{eq:SoftCond1}-\eqref{eq:SoftCond4}, and additionally
\begin{align} \label{eq:SoftCond5}
	\norm{A_i^*p}_2 \leq \gamma, \quad i \notin S
\end{align}
for some $\gamma <1$. Then the vector $\hat{z}$ obeys
\begin{align} \label{eq:OffSupp}
	\norm{\hat{z}_{S^c}}_1 \leq \frac{1-\cos \alpha}{\cos \alpha(1- \gamma)} \hat{z}_{i^*}
\end{align}
and
\begin{align} \label{eq:OnSupp}
	\norm{\hat{z}_S-z^0_{S}}_2 \leq \sin \alpha \frac{\max_{i \in S} \norm{A_i}}{\sigma_{\R^S}(A_{H_0})}\norm{z_0}_1 + \frac{\max_{i \in [n]} \norm{A_i} }{\sigma_{\R^S}(A_{H_0})}\norm{\hat{z}_{S^c}}_1.
\end{align}
$\sigma_{\R^S}(A_{H_0})$ denotes the to $\R^S$ restricted singular value of $A_{H_0}$, i.e.
\begin{align*}
	\sigma_{\R^S}(A_{H_0}) = \min_{ \norm{x}_2=1 , \supp x \sse S} \norm{A_{H_0}(x)}_2,
\end{align*}
and $\norm{\cdot}$ denotes the operator norm.
\end{prop}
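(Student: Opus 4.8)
The plan is to prove the two bounds \eqref{eq:OffSupp} and \eqref{eq:OnSupp} separately, in both cases using the vector $p$ supplied by \eqref{eq:SoftCond1}--\eqref{eq:SoftCond5} as a kind of approximate dual certificate and exploiting only the optimality $\norm{\hat z}_1 = \norm{\hat z.\widehat H}_{1,2}\leq\norm{z^0}_1$.

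For the off-support estimate \eqref{eq:OffSupp} I would argue entirely through this optimality. Since $b=\calA(\hat z.\widehat H)=\sum_{i}\hat z_i A_i\hat h_i$, one has $\sprod{b,p}=\sum_i\hat z_i\sprod{\hat h_i,A_i^*p}$, whereas \eqref{eq:SoftCond1} gives $\sprod{b,p}=\sum_{i\in S}z_i^0\sprod{h_i^0,V(i)}\geq\norm{z^0}_1$. Combining these with the optimality bound yields
\begin{align*}
	\sum_{i\in[n]}\hat z_i\bigl(\sprod{\hat h_i,A_i^*p}-1\bigr)\geq\norm{z^0}_1-\norm{\hat z}_1\geq0.
\end{align*}
Now I would estimate the bracket index by index: Cauchy--Schwarz with \eqref{eq:SoftCond5} gives $\sprod{\hat h_i,A_i^*p}-1\leq\gamma-1$ for $i\notin S$; \eqref{eq:SoftCond2} makes the bracket non-positive for $i\in S\setminus\set{i^*}$, so those terms are simply discarded; and \eqref{eq:SoftCond4} gives $\sprod{\hat h_{i^*},A_{i^*}^*p}\leq\norm{V(i^*)}_2\leq\tfrac{1}{\cos\alpha}$, so the $i^*$-bracket is at most $\tfrac{1-\cos\alpha}{\cos\alpha}$. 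The surviving inequality $(1-\gamma)\norm{\hat z_{S^c}}_1\leq\hat z_{i^*}\tfrac{1-\cos\alpha}{\cos\alpha}$ is exactly \eqref{eq:OffSupp}.

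For the on-support estimate \eqref{eq:OnSupp} the natural device is the restricted singular value, which by definition gives $\sigma_{\R^S}(A_{H_0})\norm{\hat z_S-z^0_S}_2\leq\norm{A_{H_0}(\hat z_S-z^0_S)}_2$. I would then rewrite the right-hand side using $A_{H_0}(z^0_S)=\calA(z^0.H^0)=b$ and $b=\sum_i\hat z_i A_i\hat h_i$ to get
\begin{align*}
	A_{H_0}(\hat z_S-z^0_S)=\sum_{i\in S}\hat z_i A_i\bigl(h_i^0-\hat h_i\bigr)-\sum_{i\notin S}\hat z_i A_i\hat h_i.
\end{align*}
The triangle inequality, the operator-norm bounds $\norm{A_i(h_i^0-\hat h_i)}_2\leq\norm{A_i}\norm{h_i^0-\hat h_i}_2$ and $\norm{A_i\hat h_i}_2\leq\norm{A_i}$, the angle hypothesis $\omega(h_i^0,\hat h_i)\leq\alpha$, and $\sum_{i\in S}\hat z_i\leq\norm{\hat z}_1\leq\norm{z^0}_1$ then recover the two terms on the right of \eqref{eq:OnSupp} after dividing by $\sigma_{\R^S}(A_{H_0})$.

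The step I expect to be the main obstacle is the directional factor in the first term of \eqref{eq:OnSupp}. The honest chord estimate is $\norm{h_i^0-\hat h_i}_2=\sqrt{2(1-\cos\omega(h_i^0,\hat h_i))}=2\sin\tfrac{\omega}{2}$, which for $\omega\leq\alpha$ is bounded by $2\sin\tfrac\alpha2$ rather than by $\sin\alpha$; matching the sharper $\sin\alpha$ stated in \eqref{eq:OnSupp} requires more care, namely splitting $\hat h_i=\cos\omega_i\,h_i^0+\sin\omega_i\,u_i$ with $u_i\perp h_i^0$ and absorbing the radial component $\cos\omega_i\,h_i^0$ into the magnitude comparison, so that only the transversal part of length $\sin\omega_i\leq\sin\alpha$ survives in $A_{H_0}(\hat z_S-z^0_S)$. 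A secondary point worth recording is that \eqref{eq:OnSupp} is only meaningful when $\sigma_{\R^S}(A_{H_0})>0$, i.e.\ when $A_{H_0}$ is injective on $\R^S$; this should be taken as a standing assumption on $\calA$.
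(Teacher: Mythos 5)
Your approach is the same as the paper's on both counts: you extract \eqref{eq:OffSupp} from the certificate $p$ together with the optimality of $\hat z.\widehat H$, and \eqref{eq:OnSupp} from the restricted singular value applied to the constraint $\calA(z_0.H_0-\hat z.\widehat H)=0$. Your proof of \eqref{eq:OffSupp} is correct and coincides, up to bookkeeping, with the paper's: the paper phrases your identity $\sprod{b,p}=\sum_i\hat z_i\sprod{\hat h_i,A_i^*p}$ as $\sprod{\calA^*p,\,z_0.H_0-\hat z.\widehat H}=0$ and then performs exactly your index-by-index estimates using \eqref{eq:SoftCond1}, \eqref{eq:SoftCond2}, \eqref{eq:SoftCond4} and \eqref{eq:SoftCond5}.

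For \eqref{eq:OnSupp}, the obstacle you flagged is genuine, and you should know that the paper itself does not overcome it: its proof simply asserts $\norm{h_i^0-\hat h_i}_2\leq\sin\alpha$ for $i\in S$, which is false in general, since the chord length between two unit vectors at angle $\omega_i$ is $2\sin(\omega_i/2)\geq\sin\omega_i$ (at $\omega_i=\alpha=\pi/3$ the chord is $1$ while $\sin\alpha=\sqrt{3}/2$). On this point you are more careful than the source. However, your proposed repair does not prove the stated bound either. Writing $\hat h_i=\cos\omega_i\,h_i^0+\sin\omega_i\,u_i$ with $u_i\perp h_i^0$ and moving the radial parts to the left of the constraint identity gives
\begin{align*}
\sum_{i\in S}\bigl(z_i^0-\hat z_i\cos\omega_i\bigr)A_ih_i^0=\sum_{i\in S}\hat z_i\sin\omega_i\,A_iu_i+\sum_{i\notin S}\hat z_iA_i\hat h_i,
\end{align*}
so what $\sigma_{\R^S}(A_{H_0})$ then controls is $\norm{z_S^0-(\hat z_i\cos\omega_i)_{i\in S}}_2$, not $\norm{z_S^0-\hat z_S}_2$; converting back costs an additional term of size up to $(1-\cos\alpha)\norm{\hat z_S}_2$, which is absent from \eqref{eq:OnSupp}. ``Absorbing the radial component into the magnitude comparison'' is precisely the step that silently changes which quantity is being bounded. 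What is honestly provable, by either your route or the paper's with the correct chord estimate, is \eqref{eq:OnSupp} with $\sin\alpha$ replaced by $2\sin(\alpha/2)$ --- for $\alpha\leq\pi/2$ this is at most $\sqrt{2}\sin\alpha$, so the statement survives up to a constant --- or alternatively the stated right-hand side plus the extra term $(1-\cos\alpha)\norm{\hat z_S}_2$. Your closing remark that the estimate is vacuous unless $\sigma_{\R^S}(A_{H_0})>0$ is correct, and is implicitly assumed in the paper as well.
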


\begin{proof}
	Let us begin by proving \eqref{eq:OffSupp}. Since $\hat{z}.\widehat{H}$ obeys the linear constraint, $z_0.H_0 - \hat{z}.\widehat{Z} \in \ker \calA = \ran \cal A^\perp$. Consequently, $\sprod{\calA^*p, z_0 .H_0 -\hat{z}.\widehat{H} }=0$, i.e.
	\begin{align*}
		\norm{z_0}_1 \leq \sum_{i \in S} \sprod{A_i^*p, z^0_i h^0_i} =  \sum_{i \in [n]} \sprod{A_i^*p, \hat{z}_i \hat{h}_i} \leq \tfrac{1}{\cos \alpha}\hat{z}_{i^*} + \sum_{i \in S\backslash \set{i^*}} \hat{z}_i + \gamma \sum_{i \notin S}\hat{z}_i = \left(\tfrac{1}{\cos \alpha}-1\right) \hat{z}_{i^*} + \norm{\hat{z}_S}_1 + \gamma \norm{\hat{z}_{S^c}}_1
	\end{align*}
	We used \eqref{eq:SoftCond1}, \eqref{eq:SoftCond4} and \eqref{eq:SoftCond5}. At the same time, due to the optimality of $\hat{z}. \widehat{H}$, there must be $\norm{\hat{z}_S}_1 + \norm{\hat{z}_{S^c}}_1 \leq \norm{z_0}_1$. These two inequalities imply 
	\begin{align*}
		\cos\alpha (1- \gamma) \norm{\hat{z}_{S^c}}_1 \leq (1-\cos \alpha)  \hat{z}_{i^*} 
	\end{align*}
	which implies the first half of  \eqref{eq:OffSupp}. 
	
	To prove the second estimate, we again utilize that $\calA(z_0.H_0 - \hat{z}. \widehat{H})=0$. That equation can namely be written
	\begin{align*}
		\sum_{i \in S}A_i^*h_i^0 (z^0_i - \hat{z}_i) + & \sum_{i \in S} A_i^*(h_i^0 -\hat{h}_i) \hat{z}_i - \sum_{i \notin S} \hat{z}_i A_i^*\hat{h}_i =0 \Rightarrow \\
		\norm{A_{H_0}(z_0 - \hat{z})}_2   \leq & \sum_{i \in S} \hat{z}_i\norm{A_i^*(h_i^0 -\hat{h}_i)}  + \sum_{i \notin S} \hat{z}_i \norm{A_i^*\hat{h}_i} \\
		\leq& \norm{\hat{z}_S}_1 \max_{i \in S} \norm{A_i} \sin(\alpha) + \norm{\hat{z}_{S^c}}_1 \max_{i \notin S} \norm{A_i} \\
		\leq & \sin(\alpha)\max_{i \in S} \norm{A_i}  \norm{z_0}_1 + (1- \sin \alpha) \max_{i \in [n]} \norm{A_i}  \norm{\hat{z}_{S^c}}_1
	\end{align*}
	We utilized that $\norm{h^0_i-\hat{h}_i}_2 \leq \sin \alpha$ for $i \in S$ and $\norm{\hat{z}_S} = \norm{\hat{z}}_1 - \norm{\hat{z}_{S^c}}_1 \leq \norm{z_0}_1 - \norm{\hat{z}_{S^c}}$. To finish the proof, we simply have to note that $\norm{A_{H_0}(z_0-\hat{z}_S)}_2 \geq \sigma_{\R^S}(A_{H_0}) \norm{(z_0 - \hat{z})_S}_2$ .
\end{proof}

\begin{rem}
	It is known that with high probability, $\max_{i \in [n]} \norm{A_i} \leqsim \sqrt{m} + \sqrt{k}$ and $\sigma_{\R^S}(A_{H_0}) \geqsim \sqrt{m} - \sqrt{s}$ \cite[Th. 9.6]{MathIntroToCS}. Hence, if we choose $m \geq C^2 \max(s,k)$, we  can guarantee that $$\frac{\max_{i \in [n]} \norm{A_i} }{\sigma_{\R^S}(A_{H_0})} \leq \frac{C+1}{C-1} $$ with high probability.
\end{rem}


\subsection{Taking Low Rank Into Account.}
		
		Until know, all theory has been valid for arbitrary column-sparse matrices. In the following, we will prove that under the assumption that $Z_0$ has been softly recovered by the program $\calP_{1,2}$ and $Z_0$ is a certain type of an $r$-rank matrix (we will specify the exact requirements on $Z_0$ later), the space spanned by the leading $r$ left singular vectors of the recovered matrix will be close to the true range of $Z_0$. The argumentation will rely heavily  on a classical result from  pertubation theory; the so-called $\sin \theta$-theorem. We will start this section by recalling that theorem. We follow the original paper \cite{wedin1972}.
		
		Let two matrices $A$ and $B=A+T\in \R^{m,n}$ be given. Write the singular value decomposition of $A$ in the following manner:
		\begin{align*}
			A= A_1 +A_2 = U_1(A) \Sigma_1(A) V_1(A)^* + U_2(A) \Sigma_2(A) V_2(A)^* 
		\end{align*}
		where the left and right singular vectors of $A$ are given by the columns of the matrices $[U_1(A), U_2(A)]$ and $[V_1(A), V_2(A)]$, respectively, and the union of the diagonals of $\Sigma_1$ and $\Sigma_2$ is the set of singular values of $A$. The dimensions of the matrices are as follows:
		\begin{align*}
			U_1(A) \in \R^{m,r}, \quad & \Sigma_1(A) \in \R^{r,r}, \quad  V_1(A) \in \R^{n,r}\\
			 U_2(A) \in \R^{m,m-r}, \quad & \Sigma_2(A) \in \R^{m-r,m-r}, \quad  V_2(A) \in \R^{n,m-r}
		\end{align*}
		$B_1$, $B_2$, $U_1(B)$, $\Sigma_1(B)$ and so on are defined in the same manner. The $\sin \theta$-theorem is a statement about the principal angle between the subspaces spanned by $U_1(A)$ and $U_1(B)$ (and $V_1(A)$ and $V_1(B)$). The principal angle $\angle (E,F) \in [0, \tfrac{\pi}{2}]$ between two subspaces $E$ and $F$ are thereby defined through
		\begin{align*}
			\sin \angle(E,F) = \norm{\Pi_{E^\perp}\Pi_{F}}.
		\end{align*}

		\begin{theo}[The $\sin \theta$-theorem] \cite{wedin1972}
			Assume the matrices $A, B$ and $T$ are given as above. Define the parameter $\mu$ through
			\begin{align*}
				\mu = \max( \norm{ \Pi_{\ran B_1} T}, \norm{T \Pi_{\ker B_1^\perp}}).
			\end{align*}
			Further assume that there exists a $\tau\geq 0$ and a $\delta >0$ such that
			\begin{align*}
				\sigma_{\min}(A_1) \geq \tau + \delta \ \wedge \ \sigma_{\max}(B_2)\leq \tau.
\end{align*}
Then there holds
\begin{align*}
	\max( \sin \angle (\ran A_1, \ran B_1),  \sin \angle (\ker A_1^\perp, \ker B_1^\perp )) \leq \frac{\mu}{\delta}.
\end{align*}			 
		\end{theo}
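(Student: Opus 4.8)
The plan is to reproduce the classical derivation of the $\sin\theta$-theorem through coupled residual (Sylvester) equations, reducing everything to a single invertibility estimate for a Sylvester operator whose spectral gap is controlled by the $\tau$-split hypothesis.

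\textbf{Step 1 (Reduction to cross-products of the singular factors).} Using the definition $\sin\angle(E,F)=\norm{\Pi_{E^\perp}\Pi_F}$ together with $\ran A_1=\ran U_1(A)$ and $\ker A_1^\perp=\ran V_1(A)$ (and likewise for $B$), I would first rewrite the two quantities to be estimated as
\begin{align*}
\sin\angle(\ran A_1,\ran B_1)=\norm{U_2(A)^* U_1(B)},\qquad \sin\angle(\ker A_1^\perp,\ker B_1^\perp)=\norm{V_2(A)^* V_1(B)},
\end{align*}
since for orthogonal projections onto ranges of matrices with orthonormal columns one has $\norm{\Pi_{\ran U_2(A)}\Pi_{\ran U_1(B)}}=\norm{U_2(A)^* U_1(B)}$, and analogously on the right. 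Writing $\Phi:=U_2(A)^* U_1(B)$ and $\Psi:=V_2(A)^* V_1(B)$, the goal becomes $\max(\norm\Phi,\norm\Psi)\le\mu/\delta$.

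\textbf{Step 2 (Residual equations and why $\mu$ is the right quantity).} Starting from the defining identities $B V_1(B)=U_1(B)\Sigma_1(B)$ and $B^* U_1(B)=V_1(B)\Sigma_1(B)$, I would substitute $B=A+T$, left-multiply by $U_2(A)^*$ resp.\ $V_2(A)^*$, and use the orthogonality relations $U_2(A)^* A=\Sigma_2(A)V_2(A)^*$ and $V_2(A)^* A^*=\Sigma_2(A)U_2(A)^*$. This produces the coupled system
\begin{align*}
\Phi\,\Sigma_1(B)-\Sigma_2(A)\,\Psi &= U_2(A)^* T V_1(B), \\
\Psi\,\Sigma_1(B)-\Sigma_2(A)\,\Phi &= V_2(A)^* T^* U_1(B).
\end{align*}
The precise form of $\mu$ is exactly what makes the right-hand sides small: because $U_2(A)$ and $V_2(A)$ have orthonormal columns,
\begin{align*}
\norm{U_2(A)^* T V_1(B)}\le\norm{T V_1(B)}=\norm{T\,\Pi_{\ker B_1^\perp}},\qquad \norm{V_2(A)^* T^* U_1(B)}\le\norm{T^* U_1(B)}=\norm{\Pi_{\ran B_1}\,T},
\end{align*}
so both right-hand sides are bounded by $\mu$.

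\textbf{Step 3 (Separation estimate).} I would assemble the pair into a single Sylvester equation $ZC-DZ=F$ for the stacked unknown $Z=(\Phi,\Psi)$ and invoke the standard lemma that a Sylvester operator with Hermitian coefficient matrices $C,D$ whose spectra are separated by a gap $g$ is invertible with $\norm Z\le\norm F/g$. The relevant spectra are the singular values collected in the blocks $\Sigma_1$ and $\Sigma_2$, and the hypothesis $\sigma_{\min}(A_1)\ge\tau+\delta$, $\sigma_{\max}(B_2)\le\tau$ is precisely what forces these two spectra to be separated by at least $\delta$. Feeding $\norm F\le\mu$ into the separation bound then gives $\max(\norm\Phi,\norm\Psi)\le\mu/\delta$.

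\textbf{Main obstacle.} The delicate part is Step 3, and two points need care. First, one must extract the \emph{sharp} constant $\mu/\delta$ from the \emph{coupled} pair rather than a lossy multiple of it. Second, the spectral gap that appears most naturally in the equations above is the one between the singular values of $B_1$ and those of $A_2$, which must be reconciled with the hypothesis, phrased via the $\tau$-separation of $\sigma_{\min}(A_1)$ and $\sigma_{\max}(B_2)$ (equivalently, one derives the mirror form of the residual equations involving $\Sigma_1(A)$ and $\Sigma_2(B)$). Both difficulties are removed cleanly by passing to the Jordan--Wielandt Hermitian dilation $\left(\begin{smallmatrix}0&A\\ A^*&0\end{smallmatrix}\right)$, under which the left and right singular subspaces of $A$ and $B$ merge into single eigenspaces; the statement then reduces to one \emph{uncoupled} Sylvester equation, i.e.\ an eigenspace-perturbation estimate handled by the Hermitian (Davis--Kahan) $\sin\theta$ argument. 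I would isolate that scalar separation lemma as the only genuine ingredient and obtain the theorem as its corollary.
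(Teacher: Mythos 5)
You should first be aware that the paper contains no proof of this statement at all: it is quoted from Wedin's 1972 paper, so your attempt can only be measured against the classical argument. Your Steps 1 and 2 reproduce that argument correctly: the identities $\sin\angle(\ran A_1,\ran B_1)=\norm{U_2(A)^*U_1(B)}$ and $\sin\angle(\ker A_1^\perp,\ker B_1^\perp)=\norm{V_2(A)^*V_1(B)}$, the coupled equations
\begin{align*}
\Phi\,\Sigma_1(B)-\Sigma_2(A)\,\Psi = U_2(A)^* T V_1(B),\qquad
\Psi\,\Sigma_1(B)-\Sigma_2(A)\,\Phi = V_2(A)^* T^* U_1(B),
\end{align*}
and the bound of both right-hand sides by $\mu$ are all sound, and the Jordan--Wielandt dilation is indeed the standard way to decouple the pair and keep the sharp constant $\mu/\delta$ (with the caveat that you must take the spectral subspace spanned by \emph{both} the $+\sigma_i$ and $-\sigma_i$ eigenvectors, i.e.\ by the vectors $(u_i,0)$ and $(0,v_i)$, since only then is the dilated $\sin\theta$ equal to the maximum of the two individual ones).

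The genuine gap sits exactly at the sentence in Step 3 where you wave the mismatch away: ``(equivalently, one derives the mirror form of the residual equations involving $\Sigma_1(A)$ and $\Sigma_2(B)$).'' This is not an equivalence. The mirror equations, obtained by multiplying $AV_1(A)=U_1(A)\Sigma_1(A)$ by $U_2(B)^*$, do produce the gap $\sigma_{\min}(A_1)$ versus $\sigma_{\max}(B_2)$ demanded by the hypothesis, but their right-hand sides become $-U_2(B)^*TV_1(A)$ and $-V_2(B)^*T^*U_1(A)$, i.e.\ residuals measured against $A$'s subspaces, $\norm{T\Pi_{\ker A_1^\perp}}$ and $\norm{\Pi_{\ran A_1}T}$, which are \emph{not} controlled by the stated $\mu$. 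In every consistent derivation the matrix whose subspaces enter the residual is the same matrix whose leading singular values must clear the gap; the statement as printed in the paper mixes the two pairings, and is in fact false. Concretely: take $r=1$, $A=\diag(2,1)$, $B=\diag(0,1)$, so $T=\diag(-2,0)$, $A_1=\diag(2,0)$, $A_2=\diag(0,1)$, $B_1=\diag(0,1)$, $B_2=0$. Since $\ran B_1=\ker B_1^\perp=\spn(e_2)$ and $Te_2=0$, one has $\mu=0$, and the hypothesis holds with $\tau=0$, $\delta=2$; yet $\ran A_1=\spn(e_1)$ and $\ran B_1=\spn(e_2)$ are orthogonal, so the left-hand side equals $1>0=\mu/\delta$. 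Consequently no proof of the printed statement can exist; what your argument, carried through the dilation, actually establishes is Wedin's theorem with the corrected pairing, namely $\mu$ as defined together with $\sigma_{\min}(B_1)\geq\tau+\delta$ and $\sigma_{\max}(A_2)\leq\tau$ (equivalently, the printed gap condition together with $\mu$ built from $A_1$'s subspaces). You should flag this rather than paper over it, not least because the misstatement propagates: the proof of Proposition \ref{prop:softRangeStability} invokes precisely the mixed pairing, estimating $\mu$ via $B$'s subspaces while taking the gap $\delta$ from $\sigma_{\min}(A_1)=\sigma_r(\hat{z}.\widehat{H})$ and $B_2=0$.
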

		
		With the above theory, we can prove the following result. We use the notation $\sprod{A}:=\ran A$.
		\begin{prop} \label{prop:softRangeStability}
			Let the matrix $Z_0 = z_0.H_0$ be supported on the set $S$ and have at most rank $r$. Assume furthermore that $S$ can be partitioned into $L$ disjoint subsets $S_\ell$, $\ell=1, \dots, L$ so that
			\begin{align*}
				h_i^0 =  \eta_\ell^0  \mod \pm 1 ,\quad i \in S_\ell
			\end{align*}
			for some unit-norm frame $(\eta_\ell)_{\ell \in [L]}$ of $\langle H_0 \rangle$ with lower  frame bound $\Lambda$.
			
			If then $\widehat{Z}=\hat{z}.\widehat{H}$ has the properties that $\omega(h_i^0, \hat{h}_i)\leq \alpha < \tfrac{\pi}{2}$, we have for the space $\langle\widehat{H}\rangle_r$ spanned by the $r$ first singular vectors of $\widehat{H}$
			\begin{align*}
				\sin \angle( \langle\widehat{H}\rangle_r, \sprod{H_0} )  \leq \frac{\max(\sin \alpha\norm{\hat{z}_S}_2,  \norm{\hat{z}_{S^c}}_2)}{\sqrt{\Lambda  \min_{\ell} \norm{\hat{z}_{S_\ell}}_2^2 - \sin{\alpha}\norm{\hat{z}_S}_2^2}}
			\end{align*}
		\end{prop}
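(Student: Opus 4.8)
The plan is to derive the bound from Wedin's $\sin\theta$-theorem applied to the recovered matrix $\widehat{Z}=\hat{z}.\widehat{H}$ and a carefully chosen rank-$r$ comparison matrix whose range is exactly $\sprod{H_0}$. Concretely, I would set $A:=\widehat{Z}$, so that $\ran A_1=\langle\widehat{H}\rangle_r$ is precisely the space appearing on the left-hand side, and take as second matrix $B:=\Pi_{\sprod{H_0}}\widehat{Z}_S$, i.e.\ the matrix obtained by first discarding all columns of $\widehat{Z}$ outside $S$ and then projecting the remaining columns onto $\sprod{H_0}$. By construction $\ran B\sse\sprod{H_0}$ and all columns of $B$ indexed by $S^c$ vanish; provided the quantity under the square root in the claim is positive (verified in the last step) $B$ has full rank $r$, so $B=B_1$, $B_2=0$, $\ran B_1=\sprod{H_0}$ and $\ker B_1^\perp\sse\R^S$. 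This choice of $B$ is the crucial trick: it is engineered so that the two ingredients of Wedin's perturbation quantity $\mu$ decouple exactly into the two terms of the numerator.

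Writing $T:=B-A$, I would split it column-wise into $T^{(1)}$ (columns indexed by $S$) and $T^{(2)}$ (columns indexed by $S^c$). For $i\in S$ the $i$-th column equals $-\hat{z}_i\Pi_{\sprod{H_0}^\perp}\hat{h}_i$; decomposing $\hat{h}_i=\cos\theta_i\,h_i^0+\sin\theta_i\,g_i$ with $g_i\perp h_i^0$ and $\theta_i=\omega(h_i^0,\hat{h}_i)\le\alpha$, and using $h_i^0\in\sprod{H_0}$, this column has norm $\hat{z}_i\sin\theta_i\norm{\Pi_{\sprod{H_0}^\perp}g_i}\le\hat{z}_i\sin\alpha$; hence $\norm{T^{(1)}}\le\norm{T^{(1)}}_F\le\sin\alpha\norm{\hat{z}_S}_2$, and moreover $\ran T^{(1)}\sse\sprod{H_0}^\perp$. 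For $i\notin S$ the column is $-\hat{z}_i\hat{h}_i$, so $\norm{T^{(2)}}\le\norm{T^{(2)}}_F=\norm{\hat{z}_{S^c}}_2$. Now $\Pi_{\ran B_1}=\Pi_{\sprod{H_0}}$ annihilates $T^{(1)}$ (its range is orthogonal to $\sprod{H_0}$), whence $\norm{\Pi_{\ran B_1}T}=\norm{\Pi_{\sprod{H_0}}T^{(2)}}\le\norm{\hat{z}_{S^c}}_2$; and $\Pi_{\ker B_1^\perp}$ projects onto a subspace of $\R^S$, which annihilates $T^{(2)}$ (its columns live on $S^c$), whence $\norm{T\Pi_{\ker B_1^\perp}}=\norm{T^{(1)}\Pi_{\ker B_1^\perp}}\le\sin\alpha\norm{\hat{z}_S}_2$. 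Therefore $\mu=\max(\norm{\Pi_{\ran B_1}T},\norm{T\Pi_{\ker B_1^\perp}})\le\max(\sin\alpha\norm{\hat{z}_S}_2,\norm{\hat{z}_{S^c}}_2)$, exactly the numerator.

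For the denominator I would exploit that $B$ has rank $r$, so $\sigma_{\max}(B_2)=0$ and one may take $\tau=0$, $\delta=\sigma_{\min}(A_1)=\sigma_r(\widehat{Z})$ in the theorem. It then remains to bound $\sigma_r(\widehat{Z})$ from below. I would argue on $\widehat{Z}\widehat{Z}^*=M_1+M_2$ with $M_1=\sum_{i\in S}\hat{z}_i^2\hat{h}_i\hat{h}_i^*$ and $M_2=\sum_{i\notin S}\hat{z}_i^2\hat{h}_i\hat{h}_i^*\matgeq 0$: by Weyl's inequality, dropping the positive semidefinite $M_2$ gives $\lambda_r(\widehat{Z}\widehat{Z}^*)\ge\lambda_r(M_1)$. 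Comparing $M_1$ with its clean analogue $M_1^0=\sum_{i\in S}\hat{z}_i^2 h_i^0(h_i^0)^*=\sum_\ell\norm{\hat{z}_{S_\ell}}_2^2\,\eta_\ell^0(\eta_\ell^0)^*$ and using $\norm{\hat{h}_i\hat{h}_i^*-h_i^0(h_i^0)^*}=\sin\theta_i\le\sin\alpha$ yields $\norm{M_1-M_1^0}\le\sin\alpha\norm{\hat{z}_S}_2^2$, while the lower frame bound gives $\lambda_r(M_1^0)=\min_{\norm{u}=1,\,u\in\sprod{H_0}}\sum_\ell\norm{\hat{z}_{S_\ell}}_2^2|\sprod{\eta_\ell^0,u}|^2\ge\Lambda\min_\ell\norm{\hat{z}_{S_\ell}}_2^2$. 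A further application of Weyl then gives $\sigma_r(\widehat{Z})^2=\lambda_r(\widehat{Z}\widehat{Z}^*)\ge\Lambda\min_\ell\norm{\hat{z}_{S_\ell}}_2^2-\sin\alpha\norm{\hat{z}_S}_2^2$, which is the square of the denominator.

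Combining the two estimates through the $\sin\theta$-theorem, $\sin\angle(\langle\widehat{H}\rangle_r,\sprod{H_0})=\sin\angle(\ran A_1,\ran B_1)\le\mu/\delta$ gives precisely the claimed inequality. The main obstacle, and the step requiring the most care, is the construction of $B$: it must be chosen so that Wedin's two terms separate cleanly, which forces the on-support perturbation to be orthogonal to $\sprod{H_0}$ and the off-support perturbation to be supported off $S$ — the projection-and-restriction definition above is exactly what achieves this. A secondary delicate point is the positivity of the denominator, since this is simultaneously what guarantees $\rank B=r$ (so that $\ran B_1=\sprod{H_0}$ and $\tau=0$ are legitimate) and what makes $\delta=\sigma_r(\widehat{Z})$ strictly positive; the two-step Weyl bound on $\sigma_r(\widehat{Z})$ supplies the needed quantitative spectral floor.
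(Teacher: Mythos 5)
Your proof is correct, and it follows the same skeleton as the paper's: apply Wedin's $\sin\theta$-theorem with $A=\widehat{Z}$, a comparison matrix $B$ of rank at most $r$ supported on $S$ with $\ran B=\sprod{H_0}$, the choice $\tau=0$, and the spectral floor $\delta=\sigma_r(\widehat{Z})$ bounded below by the claimed denominator. The substantive difference is your choice of $B$, and it is the better one. The paper sets $B(i)=\max(\hat{z}_i,\epsilon)\sprod{\hat{h}_i,h_i^0}h_i^0$ for $i\in S$, projecting each recovered column onto the \emph{line} spanned by $h_i^0$ (with an $\epsilon$-regularization to force $\ran B=\sprod{H_0}$, removed at the end via $\epsilon\to 0$). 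With that choice the on-support columns of the perturbation $T$ are \emph{not} orthogonal to $\sprod{H_0}$: they retain the component $\hat{z}_i\,\Pi_{W_i}\hat{h}_i$ with $W_i=\sprod{H_0}\cap\sprod{h_i^0}^\perp$, so the paper's identity $\norm{\Pi_{\ran B_1}T}=\Vert\hat{z}_{S^c}.(\Pi_{\sprod{H_0}}\widehat{H})\Vert$, which silently discards these components, is not justified once $r\geq 2$. Your $B=\Pi_{\sprod{H_0}}\widehat{Z}_S$ is engineered precisely so that the on-support perturbation lies in $\sprod{H_0}^\perp$ while the off-support perturbation lives on $S^c$; the two Wedin terms then decouple rigorously into the claimed maximum, so your construction in fact repairs this step of the paper's argument. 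Two smaller divergences: you dispense with the $\epsilon$-device by observing that positivity of the denominator already forces $\ran B=\sprod{H_0}$ (since $\norm{B^*u}_2^2=\sum_{i\in S}\hat{z}_i^2\abs{\sprod{\hat{h}_i,u}}^2>0$ for unit $u\in\sprod{H_0}$); and you obtain the floor on $\sigma_r(\widehat{Z})$ from Weyl's inequalities applied to $\widehat{Z}\widehat{Z}^*$ together with the standard fact $\norm{\hat{h}_i\hat{h}_i^*-h_i^0(h_i^0)^*}=\sin\theta_i$, whereas the paper uses the Courant--Fischer principle plus the trigonometric Lemma \ref{lem:cosSinIneq}; both routes give the same bound, yours avoiding the appendix lemma. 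One caveat you share with the paper: lower-bounding $\lambda_r(M_1^0)$ by $\Lambda\min_\ell\norm{\hat{z}_{S_\ell}}_2^2$ (respectively, invoking Courant--Fischer with the candidate subspace $\sprod{H_0}$) requires $\dim\sprod{H_0}=r$ exactly, so both arguments implicitly strengthen ``at most rank $r$'' to ``rank exactly $r$''.
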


		\begin{proof}
			Let $\epsilon >0$ be arbitrary and define the matrices $B$ and $T$ through
			\begin{align*}
				B(i) = \begin{cases} \max(\hat{z}_i,\epsilon) \sprod{\hat{h}_i,h^0_i} h^0_i & i \in S \\ 0 & i \notin S\end{cases} \quad 
				T(i) = \begin{cases} \hat{z}_i \hat{h}_i - \max(\hat{z}_i,\epsilon) \sprod{\hat{h}_i,h^0_i} h^0_i & i \in S \\ \hat{z_i} \hat{h}_i & i \notin S \end{cases}
			\end{align*}
			Adopting the notation from above, we further set $				A= \hat{z}. \widehat{H}$.
			Then $A = B+T$ and, continuing to use the notation from above,  $\sprod{A_1} = \langle \widehat{H}\rangle_r$, $\sprod{B_1} = \sprod{H_0}$. (The latter is due to $\max(\hat{z}_i,\epsilon) \langle{\hat{h}_i,h^0_i}\rangle\neq 0$ for all $i$, which is a consequence of $\alpha < \tfrac{\pi}{2}$ and $\epsilon >0$.) Hence
			\begin{align*}
				\sin\angle( \sprod{H_0},\langle \widehat{H} \rangle_r,  ) = \sin\angle(\sprod{A_1}, \sprod{B_1}).
\end{align*}		
	 We now want to apply the $\sin \theta$-theorem. Towards this end, we have to estimate the parameters $\mu$, $\tau$ and $\delta$. $\tau$ can be chosen equal to zero -- the range of $B$ is contained in $\sprod{H_0}$, and hence is at most $r$-dimensional. Consequently, $B$ has at most $r$ non-zero singular values, and $B_2=0$. To estimate $\mu$, we begin by noting that $\ker B_1^\perp = \ker B^\perp \sse \R^S$ (since $\R^{S^c} \sse \ker B$). This implies
	 \begin{align*}
	 	\norm{T \Pi_{\ker B_1^\perp}} \leq \norm{T \Pi_{\R^S}} \leq \sup_{\norm{x}_2 \leq 1} \sum_{i \in S} \abs{x_i}( \epsilon\abs{ \langle \hat{h}_i, h_i^0\rangle} + \hat{z}_i \norm{\Pi_{\sprod{H_0}^\perp}\hat{h}_i}_2 ) \leq \sqrt{s}\epsilon +  \norm{\hat{z}_S}_2 \sin \alpha. 
	 \end{align*}
		In the last step, we used the Cauchy Schwarz inequality and \begin{align*}
			\norm{\Pi_{\sprod{H_0}^\perp}\hat{h}_i}_2 = \min_{v \in \sprod{H_0}} \norm{\hat{h}_i -v }_2 \leq \norm{\hat{h}_i- \langle{\hat{h}_i,h_i^0}\rangle h_i^0}_2 = \sqrt{1 - \cos^2(\omega(\hat{h}_i,h_i^0))} \leq \sin \alpha
		\end{align*} due to $\omega(\hat{h}_i, h_i^0)\leq \alpha$ for $i \in S$. We also have, since $\sprod{B_1} = \sprod{H_0}	$
		\begin{align*}
			\norm{\Pi_{\ran B_1} T} = \norm{\hat{z}_{S^c}.(\Pi_{\sprod{H_0}}\widehat{H})} \leq \norm{\hat{z}_{S^c}}_2.
		\end{align*}
		Here we used that the vectors in $\widehat{H}$ have unit norm and an argument similar as above. Summarizing, $$\mu \leq \max ( \sqrt{s}\epsilon + \sin \alpha\norm{\hat{z}_S}_2 ,\norm{\hat{z}_{S^c}}_2)$$
		
		It remains to estimate the gap $\delta$. Since we already have noted that $\tau$ can be chosen equal to zero we mearly need to estimate $\sigma_{\min}(A_1)= \sigma_r(A)$ from below. To do this, we use two well known facts: firstly the equality $\sigma_r(A) = \sigma_r(A^*)$, and secondly the so-called $\max$-$\min$-principle (or \emph{Courant-Fischer Theorem}):
		\begin{align*}
			\sigma_r(A^*) = \max_{ \dim V \leq r } \min_{\substack{p \in V \\ \norm{p}_2 = 1} } \norm{A^*p}.
		\end{align*}
		Hence, since $\dim \sprod{H_0} \leq r$
		\begin{align*}
			\sigma_r(A^*) \geq \min_{\substack{p \in \sprod{H_0} \\ \norm{p}_2 =1}} \norm{(\hat{z}. \widehat{H})^*p}_2.
		\end{align*}
	The adjoint of $\hat{z}.\widehat{H}$ is given by $((\hat{z}.\widehat{H})p)_i= \hat{z}_i \langle{\hat{h}_i, p}\rangle$. Hence
	\begin{align*}
		\norm{(\hat{z}. \widehat{H})^*p}_2^2 =\sum_{i \in [n]} \hat{z}_i^2 \abs{\langle{\hat{h}_i,p}\rangle}^2 \geq \sum_{i \in S} \hat{z}_i^2 \abs{\langle{\hat{h}_i,p}\rangle}^2  
	\end{align*}
	Now we notice that if $\norm{p}_2=1$, the following is true due to the triangle inequality of $\omega$ and $\omega(\hat{h}_i, h_i^0)\leq \alpha$
	\begin{align*}
		\abs{\langle{\hat{h}_i, p}\rangle}^2 = \cos^2(\omega(\hat{h}_i, p)) \geq \cos^2(\min(\omega(h_i^0,p)+\alpha, \tfrac{\pi}{2})) \geq \cos^2(\omega(h_i^0,p)) - \sin\alpha = \abs{\langle h^0_i, p\rangle }^2 - \sin \alpha,
	\end{align*}
	where the proof of the last inequality is postponed to the Appendix (more specifically Lemma \ref{lem:cosSinIneq}). Therefore, we may estimate
	\begin{align*}
	 \sum_{i \in S} \hat{z}_i^2 \abs{\langle{\hat{h}_i,p}\rangle}^2  \geq  \sum_{i \in S} \hat{z}_i^2 \abs{\langle{h_i^0,p}\rangle}^2   - \sin(\alpha) \norm{\hat{z}_S}_2^2
	\end{align*}
	 Now we utilize the structure of the set $(\hat{h}_i)_{ i \in S}$ to estimate
	\begin{align*}
		\sum_{i \in S} \hat{z}_i^2 \abs{\langle{h_i^0,p}\rangle}^2 = \sum_{\ell \in [L]}   \abs{\langle{\eta_\ell^0,p}\rangle}^2 \sum_{i \in S_\ell} \hat{z}_i^2 \geq \min_{\ell} \norm{z_{S_\ell}}_2^2 \sum_{\ell \in [L]}   \abs{\langle{\eta_\ell^0,p}\rangle}^2 \geq \Lambda  \min_{\ell} \norm{z_{S_\ell}}_2^2
	\end{align*}
for all $p \in \sprod{H_0}$ with $\norm{p}_2$. All in all,
\begin{align*}
	\delta^2 \geq \Lambda  \min_{\ell} \norm{z_{S_\ell}}_2^2 - \sin(\alpha)\norm{\hat{z}_S}_2^2,
\end{align*}
which together with the $\sin \theta$-theorem proves
\begin{align*}
\sin \angle( \langle\widehat{H}\rangle_r, \sprod{H_0} )  \leq \frac{\max( \epsilon \sqrt{s} + \sin \alpha\norm{\hat{z}_S}_2,  \norm{\hat{z}_{S^c}}_2)}{\sqrt{\Lambda  \min_{\ell} \norm{\hat{z}_{S_\ell}}_2^2 - \sin{\alpha}\norm{\hat{z}_S}_2^2}}.
\end{align*}
Letting $\epsilon \to 0$ yields the claim.
		\end{proof}
		
		\begin{rem}
			In the case that $r=1$, all columns $h_i^0$, $i \in S$ must be  equal modulo sign, which implies that we may choose $L=1$, $\eta_1^0=h_1^0$, $S_1=S$ and consequently $\Lambda=1$. We then arrive at the cleaner estimate
			\begin{align*}
				\sin \angle( \langle\widehat{H}\rangle_r, \sprod{H_0} )  \leq \frac{\max(\sin \alpha\norm{\hat{z}_S}_2,  \norm{\hat{z}_{S^c}}_2)}{ \norm{\hat{z}_S}_2\sqrt{1-\sin \alpha}}.
			\end{align*}
		\end{rem}


\section{Heuristic Proposals How One Could Utilize Soft Recovery} \label{sec:heur}


In this section, we will present two ideas how one could use the phenomenon of soft recovery to  design a recovery algorithm for matrices which are both column-sparse and have low rank. These ideas are of highly heuristic nature, whence we mostly study them experimentally.

\subsection{Thresholding}

The above analysis indicates that the set of indices where $\hat{z}_i$ is large probably coincides relatively well with the support of $z_0$, since $\norm{\hat{z}_{S^c}}_1$ is small. Hence, if we choose a set $\widehat{S}$ for which $\norm{\widehat{Z}_{\widehat{S}}}_{1,2} \geq \norm{\widehat{Z}}_{1,2}*\tau$ for $\tau \approx 1$ and $\widehat{S}\geq s$, $\hat{S}$ is most probably a small set which still contains the support of the ground truth signal $Z_0$. After having identified the support, the remaining task is then to recover a low-rank matrix of dimension $k \times \abs{\widehat{S}}$, which we choose to do with nuclear norm-minimization minimization. This will be successful with high probability already when $m \geqsim r(k + \abs{\widehat{S}})\log(k \abs{\widehat{S}})$ \cite{recht2010guaranteed}.

The procedure outlined above is summarized in Algorithm \ref{alg:12ThresNuc}.

\begin{algorithm}
	\caption{(NAST) Nuclear norm After Soft recovery Thresholding.} \label{alg:12ThresNuc}
		\KwData{ A linear map $\calA: \R^{k,n} \to \R^q$, a sparsity $s$, a parameter $\tau \in (0,1)$ and a vector $b$.}
		\KwResult{ An estimate $Z_*$ of a sparse solution of $\calA(Z)=b$, where $Z_*=z_*.H_*$ with $z_*$ sparse and $H_*$ low rank.}
		
		\nl Solve the $\calP_{1,2}$ for a column-sparse matrix $\widehat{Z}= \hat{z}.\widehat{H}$. 
		
		\nl $\widehat{S} \leftarrow$ smallest set $S$ with at least $s$ indices so that $\norm{\widehat{Z}_S}_{1,2} \geq \tau \norm{\widehat{Z}}_{1,2}$.

		\nl Solve the nuclear minimization problem
		\begin{align*}
			\min_{ \supp Z \sse \widehat{S}} \norm{Z}_{*} \text{ subject to } \calA(Z) =b
		\end{align*}
		and output the solution $Z_*$.
\end{algorithm}

\subsubsection{Numerical Experiments}

To test the performance of the $NAST$-algorithm, we perform a small numerical experiment. For $m=100,105, \dots, 300$, we generate random $10$-column sparse matrices in $\R^{10,100}$ of the form $\sum_{i=1}^r h_i z_i^*$ for $r=1, 2$ and $5$ (in all experiments $s=k=10$, $n=100$.) The vectors $h_i$ are independent and uniformly distributed on the sphere where as $z$ has a uniformly drawn random support and normally distributed and independent non-zero entries. We measure these matrices with a randomly drawn Gaussian measurement matrix, and test if $NAST$ is able to recover the ground truth signal. The minimization problems were solved with help of the $MATLAB$-package {\tt cvx} \cite{cvx}. A success is declared if the relative error in Frobenius norm is less than $0.1 \%$. This experiment was repeated a hundered times, and the results are depicted in Figure \ref{fig:NAST}. We can clearly see that the $NAST$-procedure outperforms $\ell_{1,2}$. We also see that the performance is better for low ranks, as expected. We performed a small control experiment for $m=300$ testing the performance of nuclear norm minimization - out of a hundered trials, there was not a single success.

\begin{figure}
	\centering 
	\includegraphics[scale=.4]{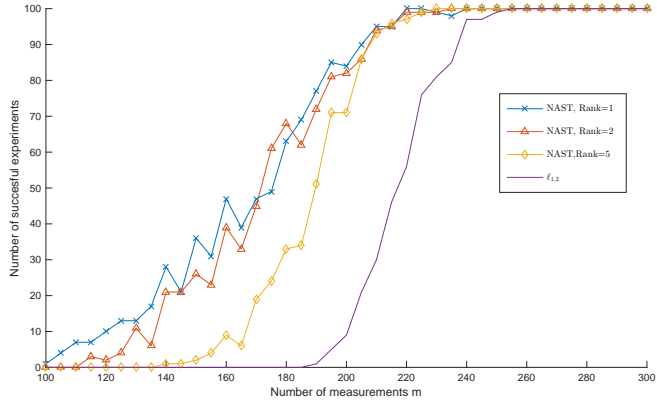}
	\caption{Results of numerical performance tests of $NAST$. \label{fig:NAST}}
\end{figure}


\subsection{''Column Streamlining''} \label{sec:ColStream}
Another idea, which is a bit more involved, we want to present is the following: If $Z_0$ is of rank $r$, the columns containing most of the energy of $\widehat{Z}$ all almost lie in an $r$-dimensional subspace, not far from $\ran Z_0$. Hence, the best rank $r$-approximation $V$ of $\ran \widehat{Z}$ should be close to $\ran Z_0$.

This information can be used to modify the $\ell_{1,2}$-norm to more greatly penalize components of the norms of $Z$ not lying in the space $V$. We propose the following way of doing this
\begin{align*}
	\min \norm{Z} + \norm{\Pi_{V^\perp} Z}_{1,2} \text{ subject to } \calA(Z)=b.
\end{align*}
The procedure of alternately calculating matrices $Z$ and subspaces $V$ according to the procedure described above is then repeated until some stopping criterion is met. Some heuristic proposals for this criterion are for instance that the difference between iterate $Z_k$ and $Z_{k+1}$ drops below  some threshold $\epsilon$, or the same for the $(r+1)$:st singular value of $Z_{k}$ (indicating that we have found a low-rank solution), or also the $(s+1)$:st largest column (indicating that we have found a sparse solution). Alternatively, one could break the iteration already when, say, $2s$ columns are larger than some threshold, form a set $\widehat{S}$ of the corresponding indices, and solve a low-rank problem with that constraint, as above. The main algorithm is summarized in Algorithm \ref{alg:ColumnStreamLining}. We have chose to call it the \emph{Column Streamlining} algorithm, since it, when successful, forces the non-zero columns of the iterates to all align in a common, low-dimensional subspace.

\begin{algorithm}
		\caption{Column Streamlining} \label{alg:ColumnStreamLining}
		\KwData{ A linear map $\calA: \R^{k,n} \to \R^m$, a rank $r$ and a vector $b$.}
		\KwResult{ An estimate $Y_*$ of a sparse solution of $\calA(Y)=b$, where $Y_*=z.H$ with $z$ sparse and $H$ of rank less than $r$.}
		Initalize $Y_0$ as the solution of $\calP_{1,2}$ for $\calA$ and $b$, and $V$ as the the best $r$-dimensional approximation of $\ran Y_0$, i.e $$V=\spn(U_1, \dots U_r),$$ where $U\Sigma V^*$ is the $SVD$ of $Y_0$.
		
		 \Repeat{A stopping criterion is met}{
		\nl  $Y_q$ $\leftarrow$ $\argmin \norm{Y}_{1,2} + \Vert{\Pi_{V_{q-1}^\perp}Y}\Vert_{1,2}$ subject to $\calA(Y)=b$.
		 
		\nl $V_q$  $\leftarrow$ The best $r$-dimensional approximation of $\ran Y_k$.
		}
		\end{algorithm}
		
		We cannot report much success about theoretical guarantees about the performance of the Column Streamlining algorithm. We however believe that it again will not perform well using the optimal amount $r(s+k)$ measurement, but instead again get stuck at the $(s\cdot k)$--bottleneck. The argument goes as follows: Let us assume that the iterates $Y_q \to Z_0$. Then it is not hard to prove that $Z_0$ has to solve the minimization problem
		\begin{align}
			\argmin \norm{Z}_{1,2} + \Vert{\Pi_{\sprod{H_0}^\perp}Z}\Vert_{1,2} \text{ subject to } \calA(Z)=b. \tag{$\calP_{1,2,\sprod{H_0}}$}
		\end{align}
		
			\begin{figure}
\centering
\includegraphics[scale=.3]{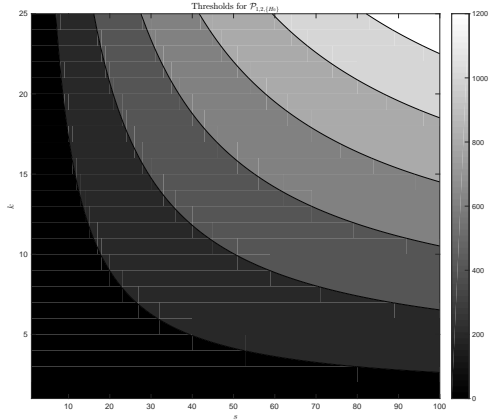}
\caption{Plot of the function \eqref{eq:ColStreamThreshold} for $n=100$ and $r=1$.\label{fig:colStreamThresholds}}
\end{figure}
	
By following the exact same route as in Section \ref{sec:statDim}  one may now easily prove that an upper bound of the amount of Gaussian measurements needed for the success of the program $\calP_{1,2,\sprod{H_0}}$ is given by
\begin{align}
	\mu_{k,s,n,r} = \inf_{\tau >0 } s(r+\tau^2 + \erw{ \pos(\norm{g^{k-r}}_2 - \tau)^2}) + (n-s) \erw{\pos(\norm{g^{k}}_2 - \tau)^2}. \label{eq:ColStreamThreshold}
\end{align}
(The only hard part of the argument, which is to calculate the subdifferential of $\norm{\cdot}_{1,2} + \Vert{\Pi_{\sprod{H_0}^\perp} \cdot}\Vert_{1,2}$,  is presented in Section \ref{sec:ColStreamApp} of the Appendix.) By plotting this function in Figure  \ref{fig:colStreamThresholds} for $n=100$, $r=1$ for varying values of $s$ and $k$, we see that it scales as $s\cdot k$ rather than as $s+k$. It should however be noted that in absolute numbers, we need considerably less measurements to secure success of $\calP_{1,2,\sprod{H_0}}$ than for $\calP_{1,2}$ - as an example, $\mu_{10,10,100,1}\approx 130$ whereas $m_{10,10,100,0}\approx 236$

\subsubsection{Numerical Experiments}

We chose to test the numerical performance of the Column Streamlining algorithm as follows: Matrices were generated as above (i.e. $k=s=10$, $n=100$) for $r=2$, for each $m=100, 101, \dots, 140$. Then we let the Column Streamlining algorithm perform $10$ iterations, and subsequently recorded the relative difference in Frobenius norm between the final iterate $Y_*$ and the ground truth signal $Z_0$. This was repeated a hundered times for each value of $m$. We again used {\tt cvx} to solve the minimization programs. The number of experiments in which the final relative error was smaller than $0.1 \%$ and $1 \%$, respectively, are depicted and compared with the results of the $NAST$-experiments in Figure \ref{fig:NASTvsColStream}. We see that the Column Streamlining outperforms the $NAST$-approach. The big difference between the number of final iterates with a relative error smaller than $0.1\%$ and $1 \%$ suggest that better results if we let the algorithm perform more iterations. 

\begin{figure}
\centering
\includegraphics[scale=.3]{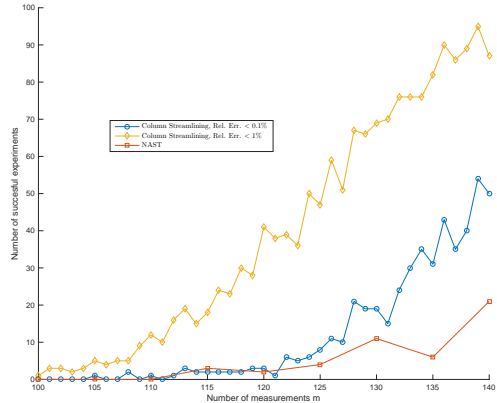}
\caption{\label{fig:NASTvsColStream} Numerical results of the Column Streamlining Algorithm.}
\end{figure}

\subsection*{Acknowledgement}
The author acknowledges support from  Deutsche
Forschungsgemeinschaft (DFG) Grant KU 1446/18~-~1 and Berlin Mathematical School. He also likes to thank Gitta Kutyniok, Saeid Haghighatshoar and Peter Jung for interesting discussions on this and other topics.

\bibliographystyle{abbrv}
\bibliography{bibliographyCSandFriends}

 \appendix
\section{Appendix}

We here provide minor proofs not contained in the main body of the article. We begin with the argument advertised in the introduction about the impossibility of soft recovery via $\ell_1$-minimization.
\begin{prop} \label{prop:l1supp} 
	Let $A \in \R^{m,n}$ and $z_0 \in \R^n$ be arbitrary. If $\ell_1$-minimization problem $\calP_1$ with $b= Az_0$ has a unique solution $z_*$, one of the following alternatives hold
	\begin{itemize}
		\item $z_0=z_*$
		\item $z_*(i) = 0$ for at least one $i \in \supp z_0$ . 
	\end{itemize}
\end{prop}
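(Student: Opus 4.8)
The plan is to prove the contrapositive. Assume that \emph{both} alternatives fail, i.e. that $z_* \neq z_0$ \emph{and} $z_*(i) \neq 0$ for every $i \in \supp z_0$, and derive a contradiction with the assumed uniqueness of $z_*$ as minimizer of $\calP_1$. Note that the second assumption says precisely that $\supp z_0 \sse \supp z_*$.

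First I would form the difference $h := z_0 - z_*$. Since $b = Az_0 = Az_*$, we have $Ah = 0$, so $z_* + th$ is feasible for $\calP_1$ for every $t \in \R$; moreover $h \neq 0$ because $z_0 \neq z_*$. The key structural observation is that $\supp h \sse \supp z_0 \cup \supp z_* = \supp z_*$, where the last equality uses $\supp z_0 \sse \supp z_*$. Thus $h$ vanishes off the support of $z_*$.

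Next I would examine the objective along this feasible line, $\phi(t) := \norm{z_* + th}_1$. For every $i \notin \supp z_*$ the $i$-th summand is identically zero (since $h(i)=0$), while for $i \in \supp z_*$ we have $z_*(i) \neq 0$, so for $\abs{t}$ small enough no sign change occurs and $\abs{z_*(i) + th(i)} = \sgn(z_*(i))(z_*(i)+th(i))$. Hence, near $t=0$, the function $\phi$ is \emph{affine}: $\phi(t) = \norm{z_*}_1 + ct$ with $c = \sum_{i \in \supp z_*}\sgn(z_*(i))\,h(i)$.

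Finally I would extract the contradiction from optimality together with uniqueness. If $c \neq 0$, then choosing $t$ of small magnitude and sign opposite to that of $c$ gives $\phi(t) < \phi(0)$, contradicting that $z_*$ minimizes $\calP_1$. If $c = 0$, then $\phi(t) = \phi(0)$ for all small $t$, so each $z_* + th$ is another minimizer, contradicting \emph{uniqueness}. Either way a contradiction arises, which completes the contrapositive. The only point requiring care — and the closest thing to an obstacle — is organizing this final dichotomy correctly: it is the combination of local affineness with \emph{strict} optimality on both sides of $t=0$ (which uniqueness supplies) that rules out every value of $c$. Mere optimality disposes of the case $c\neq 0$, but the degenerate case $c=0$ genuinely needs the uniqueness hypothesis.
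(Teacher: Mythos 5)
Your proof is correct, and it takes a genuinely different route from the paper's. The paper argues via polytope geometry: invoking a lemma from the author's earlier work (a refinement of a Donoho--Tanner result), it observes that a \emph{unique} minimizer $z_*$, after normalization, lies in the relative interior of a face of the cross-polytope $\set{x : \norm{x}_1 \leq 1}$ that $A$ maps to a face of the same dimension of its image; in particular the columns of $A$ indexed by $\supp z_*$ are linearly independent. Combined with $b = \sum_{i \in \supp z_0} z_0(i) a_i = \sum_{i \in \supp z_*} z_*(i) a_i$ and $\supp z_0 \sse \supp z_*$, linear independence forces $z_0 = z_*$. Your argument instead perturbs along the kernel direction $h = z_0 - z_*$ and exploits the local affineness of the $\ell_1$-norm at a point where no coordinate in play changes sign, then splits on the directional derivative $c$: a nonzero $c$ contradicts optimality, while $c = 0$ produces a segment of minimizers and contradicts uniqueness. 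Your approach is more elementary and entirely self-contained, requiring no external facts about faces of polytopes; the paper's approach imports heavier machinery but extracts extra structural information along the way (the linear independence of the active columns), and fits the author's broader framework. Both proofs use the uniqueness hypothesis essentially — yours only in the degenerate case $c = 0$, the paper's through the cited lemma, whose face-dimension conclusion is what delivers linear independence.
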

\begin{proof}

Denote $\supp z^* = S^*$. Towards a contradiction, assume that $z_0 \neq z_*$ and that the support $S$ of $z_0$ is contained in $S_*$. By \cite[Lem. 5.1]{Flinth2015PROMP} (which little more than a specification of \cite[Thm. 1]{DonohoTanner2005}), $\tfrac{z_*}{\norm{z_*}_1}$ then necessarily lies in the relative interior of an $(\abs{S^*}-1)$-face $F$ of the crosspolytope $$C_1 = \set{ x \in \R^n \ \vert \ \norm{x}_1\leq 1}$$ which is mapped to an $(\abs{S^*}-1)$-face of $AC_1$. In particular, the columns $(a_i)$ of $A$ corresponding to $i \in S^*$ are linearly independent. But due to the linear constraint, we have
\begin{align*}
	b= \sum_{i \in S} z_0(i) a_i = \sum_{i \in S^*} z_*(i) a_i.
\end{align*}
This equation, together with $S \sse S_*$ and the linear independence,   readily implies $z_0 =z_*$.
\end{proof}

Next, we calculate the dual problem of $\calP_1^+$, as was needed in the proof of Lemma \ref{lem:AHopt}. Note that this calculation is relatively standard, but we include it for completeness.

\begin{lem}\label{lem:DualPosL1}
The dual problem of
\begin{align*}
	\min \norm{z}_1 = \sum_{i \in [n]} z_i \text{ subject to } Ax=b \ , \ x\geq 0
\end{align*}
is equivalent to
\begin{align*}
	\max \sprod{b,p} \text{ subject to } A^*p \leq \ind,
\end{align*}
where $\ind$ is the vector consisting only of entries equal to $1$
\end{lem}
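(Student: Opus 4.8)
The plan is to derive the dual problem directly from the Lagrangian, exactly mirroring the structure used for $\calP_{1,2}$ in the proof of Proposition~\ref{prop:exactRecovery}. First I would write down the Lagrangian with two multipliers: one vector $p \in \R^m$ for the equality constraint $Ax = b$, and one vector $\mu \in \R^n_+$ for the inequality constraint $x \geq 0$ (equivalently $-x \leq 0$). This gives
\begin{align*}
	\calL(x,p,\mu) = \sprod{\ind, x} + \sprod{p, b - Ax} + \sprod{\mu, -x} = \sprod{p,b} + \sprod{\ind - A^*p - \mu, x}.
\end{align*}

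Next I would compute the dual function $g(p,\mu) = \inf_{x \in \R^n} \calL(x,p,\mu)$. Since $\calL$ is affine (indeed linear) in the unconstrained variable $x$, the infimum is $\sprod{p,b}$ if the coefficient vector $\ind - A^*p - \mu$ vanishes, and $-\infty$ otherwise. Hence $g$ is finite only when $\mu = \ind - A^*p$, and the dual problem becomes $\max \sprod{p,b}$ subject to $\mu = \ind - A^*p \geq 0$, i.e. subject to $A^*p \leq \ind$ (with the sign convention as in the statement). This eliminates $\mu$ entirely and yields the claimed dual.

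The key subtlety — and the only place requiring a small amount of care — is the treatment of the sign conventions in the generic dual framework introduced earlier: the constraint $x \geq 0$ must be written as $Dx \geq d$ with $D = \id$, $d = 0$, so that the associated multiplier $\mu$ ranges over $\R^n_+$ and enters the Lagrangian as $\sprod{\mu, d - Dx} = -\sprod{\mu, x}$. Getting this orientation right is what makes the nonnegativity multiplier drop out cleanly into the inequality $A^*p \leq \ind$ rather than an equality or a reversed inequality. I do not expect any genuine obstacle here; the result is a textbook linear-programming duality computation, and since $f$ is linear and the constraints are linear and feasible (the primal is feasible by hypothesis, as $b = Ax_0$ for the ground truth), strong duality in the sense reviewed in Section~\ref{sec:prel} guarantees the equivalence of the two problems. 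I would close by remarking that ''equivalent'' here means the two programs share the same optimal value, which is precisely what was invoked in the proof of Lemma~\ref{lem:AHopt}.
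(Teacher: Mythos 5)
Your proposal is correct, and it follows the same basic strategy as the paper's proof: a Lagrangian with a multiplier $p$ for the equality constraint and $\mu \in \R^n_+$ for $x \geq 0$, followed by elimination of $\mu$. The one genuine difference is how the objective is dualized. You replace $\norm{z}_1$ by the linear functional $\sprod{\ind,x}$ (legitimate, since the two agree on the feasible set, and this is how the lemma writes the objective), so the Lagrangian is affine in $x$; the dual function is then finite only on the affine set $\mu = \ind - A^*p$, and the requirement $\mu \geq 0$ turns into $A^*p \leq \ind$ at once --- a pure linear-programming duality computation. The paper instead keeps the objective as the genuine $\ell_1$-norm $\sum_i \abs{z_i}$ and uses the $\ell_1$/$\ell_\infty$ conjugacy: $\inf_z \left( \norm{z}_1 - \sprod{z, A^*p + \mu} \right)$ is finite exactly when $\norm{A^*p+\mu}_\infty \leq 1$, which produces the intermediate dual constraint $\norm{A^*p+\mu}_\infty \leq 1$; eliminating $\mu$ then needs the extra (small) observation that a $\mu \geq 0$ satisfying this two-sided bound exists if and only if $A^*p \leq \ind$. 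Both routes are valid and land on the same dual; yours is marginally more elementary, while the paper's retains the conjugacy structure that mirrors the $\ell_{1,2}$ computation in Proposition \ref{prop:exactRecovery}. One small caveat: the strong-duality remark at the end of your proposal is not needed for (and is not quite what is asserted by) this lemma --- the ``equivalence'' claimed here is the purely algebraic fact that projecting the dual feasible set $\set{(p,\mu) : \mu \geq 0, \ \mu = \ind - A^*p}$ onto the $p$-variable gives $\set{p : A^*p \leq \ind}$; strong duality between primal and dual is only invoked later, in Lemma \ref{lem:AHopt}, where primal feasibility is indeed available.
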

\begin{proof}
	Let us begin by writing the Lagrange dual
	\begin{align*}
		\calL(z, p, \mu) =  \norm{z}_1+ \sprod{p, b-Az} - \sprod{\mu, z} = \sprod{b,p} + \norm{z}_1 - \sprod{z, A^*z + \mu}
	\end{align*}
	We see that $\inf_z \calL(z,p,\mu)$ is finite exactly when $\norm{A^*p + \mu }_{\infty} \leq 1$, in which case it is equal to $\sprod{b,p}$. The dual problem is hence
	\begin{align*}
		\max_{p, \mu \geq 0} \sprod{b,p} \text{ subject to } \norm{A^*p + \mu}_\infty \leq 1.
	\end{align*}
	Due to the fact that there exists a $\mu \geq 0$ with $\norm{A^*p + \mu} \leq 1$ if and only if $A^*p \leq \ind$, we see the stated equivalence.
\end{proof}

Finally, we prove the trigonometric inequality used in the proof of Proposition \ref{prop:softRangeStability}.

\begin{lem} \label{lem:cosSinIneq} For $x \in [0, \pi]$ and $\alpha \in [0, \tfrac{\pi}{2}]$, we have
\begin{align*}
	\cos^2\left( \min\left(x+\alpha, \tfrac{\pi}{2}\right)\right) \geq \cos^2(x) - \sin \alpha
\end{align*}
\end{lem}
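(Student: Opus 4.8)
The plan is to decide the sign of
\begin{align*}
g(x) := \cos^2\!\left(\min(x+\alpha,\tfrac{\pi}{2})\right) - \cos^2(x) + \sin\alpha ,
\end{align*}
and the natural instrument is the product-to-sum identity $\cos^2 A - \cos^2 B = \sin(A+B)\sin(B-A)$, which turns each branch of the minimum into a single trigonometric product. The minimum changes branch at $x=\tfrac{\pi}{2}-\alpha$, so I would split the analysis at that threshold and treat the two resulting regimes separately.

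On the first branch, where $x+\alpha\le\tfrac{\pi}{2}$, the minimum equals $x+\alpha$, and the identity with $A=x+\alpha$, $B=x$ gives $\cos^2(x+\alpha)-\cos^2(x)=-\sin\alpha\,\sin(2x+\alpha)$, so that
\begin{align*}
g(x)=\sin\alpha\bigl(1-\sin(2x+\alpha)\bigr)\ge 0
\end{align*}
simply because $\sin(2x+\alpha)\le 1$. On the second branch, where $x+\alpha>\tfrac{\pi}{2}$, the minimum equals $\tfrac{\pi}{2}$, the first term vanishes, and the statement collapses to the pointwise inequality $\cos^2(x)\le\sin\alpha$. Whenever $x$ lies within $\alpha$ of $\tfrac{\pi}{2}$ this is immediate: $\abs{\cos x}=\abs{\sin(\tfrac{\pi}{2}-x)}\le\sin\alpha$, since $\abs{\tfrac{\pi}{2}-x}\le\alpha\le\tfrac{\pi}{2}$ and $\sin$ is increasing there, whence $\cos^2(x)\le\sin^2\alpha\le\sin\alpha$.

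The main obstacle is the far portion of the second branch, $x\in(\tfrac{\pi}{2}+\alpha,\pi]$, where the truncated left-hand side is pinned at $0$ while $\cos^2(x)$ climbs back toward $1$; this is exactly the regime that the near-$\tfrac{\pi}{2}$ estimate does not reach, and controlling it is the delicate heart of the matter. The route I would take is to prove first the sharper, substitution-free inequality $\cos^2\theta\ge\cos^2\phi-\sin\alpha$ valid for \emph{any} pair of angles with $\abs{\theta-\phi}\le\alpha$: this follows at once from $\cos^2\theta-\cos^2\phi=\sin(\theta+\phi)\sin(\phi-\theta)$ together with the bounds $\abs{\sin(\phi-\theta)}\le\sin\alpha$ and $\abs{\sin(\theta+\phi)}\le 1$. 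The stated form is then its specialization $\theta=\min(\phi+\alpha,\tfrac{\pi}{2})$, $\phi=x$; this is precisely the version used in Proposition~\ref{prop:softRangeStability}, where $\phi=\omega(h_i^0,p)$ and $\theta=\omega(\hat{h}_i,p)$ satisfy $\abs{\theta-\phi}\le\omega(\hat{h}_i,h_i^0)\le\alpha$ by the triangle inequality for $\omega$.

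Consequently, the single step on which everything hinges — and where I would concentrate the remaining care — is the verification that the truncation point $\min(x+\alpha,\tfrac{\pi}{2})$ never departs from $x$ by more than $\alpha$ across the whole range $x\in[0,\pi]$. For $x\le\tfrac{\pi}{2}$ this holds by inspection of the two branches, and it is exactly this displacement bound that, fed into the sharper inequality above, yields $g(x)\ge 0$; quantifying the displacement for $x$ beyond $\tfrac{\pi}{2}$ is the genuinely delicate point of the argument and is where I expect the proof to demand the most attention.
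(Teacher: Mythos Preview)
Your suspicion about the region $x>\tfrac{\pi}{2}$ is well placed---so well placed, in fact, that no amount of care will close it: the lemma as stated is \emph{false} there. Take $x=\pi$ and any $\alpha<\tfrac{\pi}{2}$; then $\min(x+\alpha,\tfrac{\pi}{2})=\tfrac{\pi}{2}$, the left-hand side is $0$, and the right-hand side is $\cos^2(\pi)-\sin\alpha=1-\sin\alpha>0$. The displacement $\bigl|\min(x+\alpha,\tfrac{\pi}{2})-x\bigr|=x-\tfrac{\pi}{2}$ genuinely exceeds $\alpha$ once $x>\tfrac{\pi}{2}+\alpha$, and the inequality collapses with it.

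The paper's own proof shares the same defect. Its second case deduces $\sin\alpha\ge\cos x$ and then writes $\cos^2 x-\sin\alpha\le\cos^2 x-\cos x\le 0$; but $\cos x(\cos x-1)\le 0$ requires $\cos x\ge 0$, i.e.\ $x\le\tfrac{\pi}{2}$. On $[0,\tfrac{\pi}{2}]$ your argument and the paper's are complete and essentially the same (your product-to-sum step is the paper's addition-formula expansion in compressed form); beyond $\tfrac{\pi}{2}$ both stall---the difference being that you noticed.

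You have also already written down the correct repair. Your ``sharper, substitution-free inequality'' $\cos^2\theta\ge\cos^2\phi-\sin\alpha$ whenever $|\theta-\phi|\le\alpha\le\tfrac{\pi}{2}$ is valid exactly as you argue, via $\cos^2\theta-\cos^2\phi=\sin(\theta+\phi)\sin(\phi-\theta)$ together with $|\sin(\phi-\theta)|\le\sin\alpha$. And it is precisely what Proposition~\ref{prop:softRangeStability} actually needs: there $\theta=\omega(\hat h_i,p)$ and $\phi=\omega(h_i^0,p)$ satisfy $|\theta-\phi|\le\omega(\hat h_i,h_i^0)\le\alpha$ by the triangle inequality for $\omega$, with no truncation at $\tfrac{\pi}{2}$ involved. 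The intermediate quantity $\cos^2(\min(x+\alpha,\tfrac{\pi}{2}))$ in the paper's chain is an unnecessary detour that creates the problem; bypassing it, as you propose, is the right move and renders the lemma in its stated form superfluous.
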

\begin{proof}
	We treat the cases $x+\alpha \leq \tfrac{\pi}{2}$ and $x+\alpha \geq \tfrac{\pi}{2}$ separately. In the first case, we have
	\begin{align*}
		\cos^2\left( \min\left(x+\alpha, \tfrac{\pi}{2}\right)\right) &= \cos^2(x+\alpha) = (\cos(x)\cos(\alpha) - \sin(x) \sin( \alpha))^2 \\
		&= \cos^2(x) \cos^2(\alpha) - 2 \cos(x) \sin(x) \cos(\alpha) \sin(\alpha) + \sin^2(x) \sin^2(\alpha) \\
		&= \cos^2(x) - \sin(\alpha) \left( (\cos^2(x)-\sin^2(x))\sin(\alpha) + 2\cos(x)\sin(x)\cos(\alpha) \right) \\
		&= \cos^2(x) -\sin(\alpha)\left( \cos(2x)\sin(\alpha) + \sin(2x)\cos(\alpha) \right) \\
		&= \cos^2(x) - \sin(\alpha) \sin(2x+\alpha) \geq \cos^2(x) - \sin(\alpha).
	\end{align*}
In the second case, we may argue as follows: $\alpha$ and $\tfrac{\pi}{2}$ both lie in the interval $[-\tfrac{\pi}{2},\tfrac{\pi}{2}]$, in which $\sin$ is increasing. The inequality $\alpha \geq \tfrac{\pi}{2}-x$ therefore implies $\sin(\alpha) \geq \sin\left( \tfrac{\pi}{2} -x \right) = \cos(x)$. Hence
\begin{align*}
	\cos^2(x) -\sin(\alpha) \leq \cos^2(x)-\cos(x) \leq 0 = \cos^2\left( \min\left(x+\alpha, \tfrac{\pi}{2}\right)\right),
\end{align*}
which proves the claim.
\end{proof}

\subsection{The Subdifferential of the Norm Used In $\calP_{1,2,\sprod{H_0}}$} \label{sec:ColStreamApp}
Here we provide the calculation of the subdifferential of $\norm{\cdot}_{1,2} + \Vert{\Pi_{\sprod{H_0}^\perp} \cdot}\Vert_{1,2}$ at $Z_0$, as was advertised in \ref{sec:ColStream}. 
	
		\begin{prop} Let $Z_0=z_0.H_0$ be supported on the set $S$. Then the subdifferential of $\norm{\cdot}_{1,2} + \norm{\Pi_{\sprod{H_0}^\perp}\cdot}_{1,2}$  at $Z_0$ is given by the set of matrices $V$ for which
		\begin{align*}
			V(i) &\in h_i^0 +  B_1(0) \cap \sprod{H_0}^\perp, \quad i \in S\\
			V(i) &\in B_1(0) +  B_1(0) \cap \sprod{H_0}^\perp, \quad i \notin S .
		\end{align*}
		\end{prop}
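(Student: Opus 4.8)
The plan is to exploit the separable structure of the two summands together with the standard subdifferential calculus. Write $N = f_1 + f_2$ with $f_1(Z) = \norm{Z}_{1,2}$ and $f_2(Z) = \norm{\Pi_{\sprod{H_0}^\perp}Z}_{1,2}$. Since both $f_1$ and $f_2$ are finite-valued convex functions on all of $\R^{k,n}$, the Moreau--Rockafellar sum rule applies without any qualification, giving $\partial N(Z_0) = \partial f_1(Z_0) + \partial f_2(Z_0)$. Both functions moreover split as sums over the columns, $f_j(Z) = \sum_{i \in [n]} \phi_j(Z(i))$, so the subdifferentials split column-wise as well. It therefore suffices to compute, for each fixed column index $i$, the subdifferential of the corresponding convex function of a single vector $v \in \R^k$ at the point $v = Z_0(i)$, and then to read off the two cases $i \in S$ and $i \notin S$.

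For $f_1$ the relevant column function is $\phi_1(v) = \norm{v}_2$, whose subdifferential is the singleton $\set{v/\norm{v}_2}$ when $v \neq 0$ and the closed unit ball $B_1(0)$ when $v = 0$. Since $Z_0(i) = z_i^0 h_i^0$ with $z_i^0 > 0$ for $i \in S$ and $Z_0(i) = 0$ for $i \notin S$, this contributes the column $h_i^0$ on $S$ and the ball $B_1(0)$ off $S$.

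For $f_2$ the column function is $\phi_2(v) = \norm{\Pi v}_2$ with $\Pi := \Pi_{\sprod{H_0}^\perp}$, i.e. the Euclidean norm precomposed with a self-adjoint idempotent linear map. I would invoke the chain rule $\partial(\norm{\cdot}_2 \circ \Pi)(v) = \Pi^*\,\partial\norm{\cdot}_2(\Pi v) = \Pi\,\partial\norm{\cdot}_2(\Pi v)$, which is valid since the outer norm is finite and continuous everywhere and $\Pi^* = \Pi$. The crucial observation is that $\Pi Z_0(i) = 0$ for every $i$: for $i \in S$ because $Z_0(i)$ is a multiple of $h_i^0 \in \sprod{H_0}$, and for $i \notin S$ trivially. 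Hence $\partial\norm{\cdot}_2(\Pi Z_0(i)) = B_1(0)$ for all $i$, and the column of $\partial f_2(Z_0)$ is $\Pi B_1(0)$, independently of whether $i$ lies in $S$.

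The one identity that must be pinned down — and the only place where a genuine (if short) argument is required — is $\Pi B_1(0) = B_1(0) \cap \sprod{H_0}^\perp$, i.e. that projecting the closed Euclidean unit ball orthogonally onto a subspace returns exactly the unit ball of that subspace. Combining the two column-wise computations and adding them according to the sum rule then yields, for $i \in S$, the set $h_i^0 + B_1(0) \cap \sprod{H_0}^\perp$, and for $i \notin S$, the set $B_1(0) + B_1(0) \cap \sprod{H_0}^\perp$, which is precisely the claim. I expect the main obstacle to be purely one of bookkeeping: justifying the sum and chain rules cleanly and identifying $\Pi B_1(0)$; once $\Pi Z_0(i)=0$ is noted, everything else is a direct substitution.
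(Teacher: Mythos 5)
Your proposal is correct, but it takes a genuinely different route from the paper's proof. The paper argues directly from the definition of the subdifferential: it writes down the inequality $N(Z_0+U)\geq N(Z_0)+\sprod{V,U}$, exploits column-wise independence of $U$, and then in each of the two cases verifies the claimed $V$ by hand (sufficiency via Cauchy--Schwarz-type estimates, necessity by plugging in carefully chosen test vectors such as $\pm h^0$, $\lambda v_1$, $\lambda v_2$ and letting $\lambda\to 0$, or the vector $\mu\eta_1+(\lambda-1)\eta_2$ in the off-support case). You instead assemble the result from standard convex calculus: the Moreau--Rockafellar sum rule (valid here since both summands are finite-valued, hence continuous), column-wise separability, the known subdifferential of $\norm{\cdot}_2$, and the chain rule $\partial\left(\norm{\cdot}_2\circ\Pi\right)(v)=\Pi\,\partial\norm{\cdot}_2(\Pi v)$ for the self-adjoint projection $\Pi=\Pi_{\sprod{H_0}^\perp}$, whose qualification condition is trivially met because the Euclidean norm is everywhere finite. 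The decisive observations --- that $\Pi Z_0(i)=0$ for every $i$, and that $\Pi B_1(0)=B_1(0)\cap\sprod{H_0}^\perp$ --- are both correct and easy to justify (the projection of the unit ball onto a subspace is the unit ball of that subspace, since $\norm{\Pi v}_2\leq\norm{v}_2$ and $\Pi$ fixes the subspace pointwise). Your approach buys brevity, immunity to the fiddly limit arguments, and equality of sets in one stroke (both inclusions come for free from the calculus rules), and it generalizes readily to other linear precompositions; the paper's approach buys self-containedness, requiring no appeal to the sum and chain rules or their qualification hypotheses. One small point worth making explicit in a final write-up: for $i\in S$ the singleton $\set{Z_0(i)/\norm{Z_0(i)}_2}=\set{h_i^0}$ uses the paper's convention $z_i^0>0$, which you did note correctly.
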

\begin{proof}
	We are looking for the set of matrices $V$ for which
	\begin{align*}
		\sum_{i \in S} \norm{h_i^0 + U(i)}_2 + \norm{\Pi_{\sprod{H_0}^\perp}U(i)}_2 + \sum_{i \notin S} \norm{U(i)}_2 + \norm{\Pi_{\sprod{H_0}^\perp}U(i)}_2 \geq \sum_{i \in S} \norm{h_i^0}_2 + \sprod{V(i),U(i)} + \sum_{i \notin S} \sprod{V(i), U(i)} 
	\end{align*}
	for all $U \in \R^{k,n}$. We used that $h_i^0 \in \sprod{H_0}$ for all $i \in S$. Since the columns of $U$ can be chosen independently, we can treat each index separately. In the following, we drop the indices in the calculations for notational simplification, and use lower case letters to denote columns of $V$ and $U$, respectively.
	
	{\bf Case 1: $i \in S$.} We aim to characterize the vectors $v \in \R^k$ for which
	\begin{align} \label{eq:subdiffPiV}
		 \norm{h^0 + u}_2 + \norm{\Pi_{\sprod{H_0}^\perp}u}_2 \geq \norm{h^0}_2 + \sprod{v,u}
	\end{align}
	for all vectors $u \in \R^k$. If $v \in  h^0 +  B_1(0) \cap \sprod{H_0}^\perp$, \eqref{eq:subdiffPiV} is satisfied:
	\begin{align*}
	\norm{h^0}_2 + \sprod{v,u}= \norm{h^0}_2 + \sprod{h^0,\Pi_{\sprod{H_0}}u} + \sprod{\Pi_{\sprod{H_0}^\perp}v,\Pi_{\sprod{H_0}^\perp}u} & \leq \norm{h^0 + \Pi_{\sprod{h^0}}u}_2 + \norm{\Pi_{\sprod{H_0}^\perp}u}_2 \\
	&\leq \norm{h^0 + u}_2 + \norm{\Pi_{\sprod{H_0}^\perp}u}_2.
	\end{align*}
	We now only need to argue that \eqref{eq:subdiffPiV} implies that $v \in h^0 +  B_1(0) \cap \sprod{H_0}^\perp$. By plugging in $\pm h^0$ for $u$, we see that
	\begin{align*}
		\sprod{h^0,v} = \norm{h^0}_2.
	\end{align*}
	Hence, $v$ is of the form $h^0 + v_1 + v_2$ for $v_1 \in \sprod{H_0}$, $v_1 \perp h^0$ and $v_2 \perp \sprod{H_0}$. Now, by plugging in $\lambda v_1$ with $\lambda >0$ for $u$, we obtain
	\begin{align*}
		 \norm{h^0}_2 + \lambda \norm{v_1}_2^2 \leq \norm{h^0 + \lambda v_1}_2 = \sqrt{\norm{h^0}_2^2 + \lambda^2 \norm{v_1}_2^2} \Rightarrow \norm{v_1}^2 \leq \frac{\sqrt{\norm{h^0}_2^2 + \lambda^2 \norm{v_1}_2^2} - \norm{h^0}_2}{\lambda}, \lambda >0
	\end{align*}
	By letting $\lambda \to 0$, we obtain $\norm{v_1}_2=0$. Similarly, plugging in $\lambda v_2 $ for $u$, we obtain
	\begin{align*}
		\norm{h^0}+ \lambda \norm{v_2}_2^2 \leq \norm{h^0 + \lambda v_2}_2 + \lambda \norm{v_2}_2 \Rightarrow (\norm{v_2}_2^2- \norm{v_2}_2) \leq \frac{\sqrt{\norm{h^0}_2^2 + \lambda^2 \norm{v_2}_2^2} - \norm{h^0}_2}{\lambda}, \lambda >0.
	\end{align*}
Again by letting $\lambda \to 0$, we obtain $(\norm{v_2}_2^2- \norm{v_2}_2) \leq 0$, which only is satisfied if $\norm{v_2}_2\leq 1$.

	{\bf Case 2: $i \notin S$.} Here the aim is to characterize the vectors $v$ for which
	\begin{align} \label{eq:subdiffPiVOff}
		 \norm{u}_2 + \norm{\Pi_{\sprod{H_0}^\perp}u}_2 \geq \sprod{v,u}
	\end{align}
	is true for every $u \in \R^k$. It is not hard to see that \eqref{eq:subdiffPiVOff} is satisfied for every $v$ of the form $w_1 + w_2$ with $\norm{w_1}_2 \leq 1$ and $\norm{w_2}_2 \leq 1$, $w_2 \perp \sprod{H_0}$. 
	
	To see that \eqref{eq:subdiffPiVOff} implies that $v$ can be written as claimed is a bit more tricky. It is clear that we can always write $v$ in the form $\mu \eta_1 + \lambda \eta_2$ with $\eta_1 \in \sprod{H_0}$ and $\eta_2 \perp \sprod{H_0}$ both having unit norm. Hence, $v = (\mu \eta_1 + (\lambda -1) \eta_2) + \eta_2$ and the proof is finished as soon as we have argued that $\mu^2 + (\lambda-1)^2 \leq 1$. However, plugging in $\mu \eta_1 + (\lambda -1) \eta_2$ for $u$ in \eqref{eq:subdiffPiVOff} yields
	\begin{align*}
		\sqrt{\mu^2 + (\lambda -1)^2} + (\lambda -1) \geq \mu^2 + \lambda(\lambda -1) \Rightarrow \sqrt{\mu^2 + (\lambda -1)^2} \geq \mu^2 + (\lambda -1)^2,
	\end{align*}
	which implies $\mu^2 + (\lambda-1)^2 \leq 1$.
\end{proof}
\end{document}